\numberwithin{equation}{section}
\theoremstyle{definition}
\newtheorem{thm}{Theorem}[section]
\newtheorem{rem}[thm]{Remark}
\newtheorem{lem}[thm]{Lemma}
\newtheorem{cor}[thm]{Corollary}
\newtheorem{conj}[thm]{Conjecture}
\newtheorem{q}[thm]{Question}
\newtheorem*{thm*}{Theorem}
\newtheorem*{rem*}{Remark}
\newtheorem*{folg*}{Folgerung}
\newtheorem*{examples*}{Beispiele}
\newtheorem*{ex*}{Beispiel}
\newtheorem*{lem*}{Lemma}
\newtheorem*{prop*}{Proposition}
\newtheorem*{defi*}{Definition}
\newtheorem*{exercise*}{Übung}
\newtheorem*{conj*}{Conjecture}
\newtheorem*{q*}{Question}
\newcommand{\bP}{\mathbf{P}}
\newcommand{\bbN}{\mathbb{N}}
\newcommand{\bbR}{\mathbb{R}}
\newcommand{\cE}{\mathcal{E}}
\newcommand{\cF}{\mathcal{F}}
\newcommand{\cG}{\mathcal{G}}
\newcommand{\cP}{\mathcal{P}}
\newcommand{\cT}{\mathcal{T}}
\newcommand{\cU}{\mathcal{U}}
\newcommand{\es}{\emptyset}
\newcommand{\Lra}{\Leftrightarrow}
\newcommand{\ra}{\rightarrow}
\newcommand{\Ra}{\Rightarrow}
\newcommand{\sm}{\setminus}
\newcommand{\sbse}{\subseteq}
\newcommand{\spse}{\supseteq}
\newcommand{\Fix}{\operatorname{Fix}}
\author{Nicolas Nagel}
\affil{Department of Mathematics, TU Chemnitz, Germany \\ nicolas.nagel@math.tu-chemnitz.de}
\title{Notes on the Union Closed Sets Conjecture}
\date{}
\begin{document}

\maketitle


\begin{abstract}
The \emph{Union Closed Sets Conjecture} states that in every finite, nontrivial set family closed under taking unions there is an element contained in at least half of all the sets of the family. We investigate two new directions with respect to the conjecture. Firstly, we consider the frequencies of all elements among a union closed family and pose a question generalizing the Union Closed Sets Conjecture. Secondly, we investigate structures equivalent to union closed families and obtain a weakening of the Union Closed Sets Conjecture. We pose some new open questions about union closed families and related structures and hint at some further directions of research regarding the conjecture.
\end{abstract}
\textbf{Keywords:} Union closed sets conjecture, element frequencies, interior operator, congruence relation, up-sets, intersecting families

\section{Introduction}

For an integer $n \in \bbN$ set $[n] \coloneqq \{1, \dots, n\}$ and let $\cP(n) \coloneqq \cP([n])$ be the power set over $[n]$. A family $\cF \sbse \cP(n)$ is called \emph{nontrivial} if
$$
\bigcup \cF \coloneqq \bigcup_{F \in \cF} F = [n]
$$
($\bigcap \cF$ being defined similarly) and \emph{union closed} if for $A, B \in \cF$ also $A \cup B \in \cF$. Notice that for any nontrivial, union closed family $\cF \sbse \cP(n)$ we have $[n] = \bigcup \cF \in \cF$.

\begin{conj} [Union Closed Sets Conjecture] \label{conj:UCS}
For every nontrivial, union closed $\cF \sbse \cP(n)$ there is an $x \in [n]$ with
$$
\#\{F \in \cF: x \in F\} \geq \frac{1}{2} \cdot \#\cF.
$$
\end{conj}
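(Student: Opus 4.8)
The plan is to attack Conjecture~\ref{conj:UCS}, which is a well-known open problem, so what follows is a research program rather than a finished argument. The most direct attack is a double-counting / averaging argument: one would hope that the average set size $\frac{1}{\#\cF}\sum_{F\in\cF}\#F$ over a union closed family is at least $n/2$, which would at once produce an element of frequency at least $\frac12\#\cF$. This is false in general — e.g.\ $\cF=\{\es,\{1\},[n]\}$ has average set size $\frac{n+1}{3}$, well below $n/2$ for $n\ge 3$ — so the averaging must be localised: fix $x\in[n]$, set $\cF_x\coloneqq\{F\in\cF : x\notin F\}$, and study the shift $\cF_x\to\cF\setminus\cF_x$, $F\mapsto F\cup\{x\}$. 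If this map is injective for some $x$ then that $x$ already has frequency at least $\frac12\#\cF$; in general one wants to control the total failure of injectivity summed over all $x$. First I would catalogue the cases where this works cleanly — families containing a singleton, families with a member of size $1$ or $2$, families over few ground elements, powers of small families — since these are precisely the classical partial results, and check whether the reformulations developed later in these notes permit a reduction to such a case.

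Second, I would bring in the entropy method, which currently yields the best unconditional constant. Sample $A,B$ uniformly and independently from $\cF$; then $A\cup B\in\cF$, so $H(A\cup B)\le\log_2\#\cF=H(A)$. Writing $A=(A_1,\dots,A_n)\in\{0,1\}^n$ and using the chain rule, $H(A\cup B)=\sum_i H\bigl((A\cup B)_i\mid (A\cup B)_{<i}\bigr)$, one compares each term with $H(A_i\mid A_{<i})$ after replacing the prefix $(A\cup B)_{<i}$ by a coupling of $A_{<i}$ and $B_{<i}$. Convexity of the binary entropy function then shows that if \emph{every} element had frequency $p$ bounded away from $\frac12$, the inequality $H(A\cup B)\le H(A)$ would be violated; iterating the estimate — feeding the resulting bound on the typical set size back into the coupling — pushes the forbidden range of $p$ from a tiny constant up to $p<\frac{3-\sqrt5}{2}\approx 0.382$. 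I would write this computation out carefully, tracking exactly the entropy lost at the step where the joint law of the two prefixes is discarded.

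The main obstacle — and the reason the conjecture remains open — is closing the gap between the entropy threshold $\frac{3-\sqrt5}{2}$ and the conjectured $\frac12$: the chain-rule argument is genuinely lossy where the joint distribution of the prefixes is forgotten, and no one knows how to choose a coupling, or a non-uniform sampling measure on $\cF$, that recovers the missing $\approx 0.12$. I therefore expect that neither a purely local injection nor a purely information-theoretic inequality will suffice on its own, and that the resolution will require new structural input about union closed families — for instance exploiting the lattice of closed sets, or the equivalent quotient and interior-operator descriptions studied in the remainder of this paper, to force the extremal instances into a shape on which the localised averaging of the first paragraph becomes decisive.
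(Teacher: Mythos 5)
You have correctly identified that Conjecture~\ref{conj:UCS} is an open problem, and the paper offers no proof of it — nor does anyone else. There is therefore no ``paper proof'' against which to compare your attempt; what you have written is, by your own framing, a research program rather than an argument, and it should be evaluated as such.

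As a survey of attack routes your sketch is accurate but largely orthogonal to what these notes actually do. Your first paragraph (the shift map $F \mapsto F \cup \{x\}$ and its failure of injectivity) is indeed the classical starting point and underlies the ``folklore'' singleton result that the paper invokes in the remark before Lemma~\ref{lem:Frequencies}, as well as Lemma~\ref{lem:Frequencies} itself. Your second paragraph describes the entropy method of Gilmer and its refinements reaching the threshold $(3-\sqrt{5})/2$; this is correct as a description of the state of the art, but it postdates the present notes and is not touched on here at all — the paper never samples from $\cF$, never invokes entropy, and never obtains a constant-fraction bound. The paper's own contribution is structural and much weaker: it reformulates union closed families via interior operators and congruence partitions (Theorems~\ref{thm:UC<->IO} and~\ref{thm:CharacterizingCR}, Corollary~\ref{cor:Ordering}), derives from this a weakening of the conjecture stated in terms of up-sets (Theorems~\ref{thm:WeakUCS} and~\ref{thm:UpsetIntersections}), and then combines that with a Tur\'an-type argument to recover only an $\Omega(\sqrt{\log_2 n}/n)$ frequency bound (Theorem~\ref{thm:FreqencyBound}), which is worse than the bounds of Balla, Reimer, and W\'ojcik cited in the remark following it. Your closing sentence — that the quotient and interior-operator descriptions might force extremal instances into a tractable shape — is in fact the spirit of this paper, so if you want to align your program with what is done here, that is the thread to pull.

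One small correction of emphasis: you present the shift map as something one would ``hope is injective for some $x$''; it is worth noting explicitly that injectivity for a fixed $x$ holds automatically whenever $\{x\} \in \cF$ (the singleton case), because then $F \mapsto F \cup \{x\}$ on $\{F : x \notin F\}$ has a left inverse given by intersecting with the complement of $\{x\}$ inside the union-closed structure — this is precisely the mechanism generalized in Lemma~\ref{lem:Frequencies}, where $\{x\}$ is replaced by an arbitrary $A \in \cF$ containing $x$ at the cost of a factor $2^{\#A - 1}$.
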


Here $\# X$ denotes the cardinality of a set $X$. If we desire, we may assume that $\es \in \cF$, since this only makes the above conjecture harder. Dating back to at least the 1980s, the Union Closed Sets Conjecture \ref{conj:UCS} (also referred to as \emph{Frankl's conjecture}) has a long and rich history. For more details on the development of the conjecture and what is known about it see the survey \cite{BS15}. In recent years \cite{AEL20, ADPRT20, BD18, EIL22, Kar17, Mas16, Raz17, Stu20, Tia21, VZ17} have further been published investigating the conjecture with respect to one aspect or another. In this context, the collaborative effort in \cite{Gow16} should also be mentioned.

Even though the Union Closed Sets Conjecture \ref{conj:UCS} has a rather simple statement, so far no proof or counterexample is known. In this paper, we want to investigate the conjecture in two new ways. Firstly, we may reformulate the conjecture to the statement that for every nontrivial, union closed family $\cF$ the \emph{most frequent} element among $\cF$ is contained in at least half of all sets of $\cF$. It is natural to ask, whether we can make similar statements about the other less frequent elements, which leads to a generalization of the conjecture.

Secondly, we will investigate structures equivalent (\emph{cryptomorphic}) to union closed families. While studying the Union Closed Sets Conjecture \ref{conj:UCS} by use of equivalent structures is not new, it seems that this approach has not been used to its fullest yet. These equivalent structures enable us to get a better idea of the inner structure of union closed families. In particular, we will prove a weaker version of the conjecture. These ideas, while not being strong enough to prove the conjecture itself, certainly give new insights on how one can study union closed families further. Again, we pose some new questions that give a hint on how to approach the conjecture in a new way.

\section{Frequencies}

Let $\cF \sbse \cP(n)$ be a nontrivial, union closed family. By permuting the elements of the \emph{ground set} $[n]$, we may assume
\begin{align} \label{ineq:Ordering}
\#\{F \in \cF: 1 \in F\} \geq \#\{F \in \cF: 2 \in F\} \geq \dots \geq \#\{F \in \cF: n \in F\}.
\end{align}
The Union Closed Sets Conjecture \ref{conj:UCS} is then equivalent to the statement that
$$
\#\{F \in \cF: 1 \in F\} \geq \frac{1}{2} \cdot \#\cF.
$$
What can be said about the other, less frequent elements? Notice that, assuming the Union Closed Sets Conjecture \ref{conj:UCS} holds, the family
$$
\cF' \coloneqq \{F \sm \{1\}: 1 \in F \in \cF\}
$$
is again a nontrivial, union closed family of size at least $\frac{1}{2} \cdot \#\cF$ now over the ground set $\{2, \dots, n\}$. Applying the Union Closed Sets Conjecture \ref{conj:UCS} to $\cF'$ we get an element $x \in \{2, \dots, n\}$ with
$$
\#\{F \in \cF: x \in F\} \geq \#\{F \in \cF': x \in F\} \geq \frac{1}{2} \cdot \#\cF' \geq \frac{1}{4} \cdot \#\cF.
$$
In particular, since the element $2$ is at least as frequent in $\cF$ as $x$ we get
$$
\#\{F \in \cF: 2 \in F\} \geq \frac{1}{4} \cdot \#\cF.
$$
Iterating this argument we have shown (assuming the Union Closed Sets Conjecture \ref{conj:UCS})
\begin{align} \label{ineq:GeneralizedUCS}
\#\{F \in \cF: k \in F\} \geq \frac{1}{2^k} \cdot \#\cF
\end{align}
for all $k \in [n]$. However, one might feel that every iterative step is a bit wasteful since we only consider sets from $\cF'$ even though there might be a lot of sets in $\cF \sm \cF'$ also containing $2$ or any other element. We therefore ask whether the right hand side of \eqref{ineq:GeneralizedUCS} can be improved. We suggest that this is indeed the case and that even the constant fraction on the right hand side can be raised.

\begin{q} \label{que:UCSFrequencies}
Let $\cF \sbse \cP(n)$ be a nontrvial, union closed family fulfilling \eqref{ineq:Ordering}, is it then true that
\begin{align} \label{ineq:Frequencies}
\#\{F \in \cF: k \in F\} \geq \frac{1}{2^{k-1}+1} \cdot \#\cF
\end{align}
for all $k \in [n]$?
\end{q}

Notice that $k=1$ yields the Union Closed Sets Conjecture \ref{conj:UCS}. If Question \ref{que:UCSFrequencies} indeed holds then the constant $1/(2^{k-1}+1)$ on the right hand side of \eqref{ineq:Frequencies} is optimal. For this fix $k \in [n]$ and consider the family
\begin{align} \label{eq:ExampleEquality}
\cF = \cP(k-1) \cup \{[n]\},
\end{align}
which already fulfils \eqref{ineq:Ordering}. Here $\#\cF = 2^{k-1}+1$ and $\{F \in \cF: k \in F\} = \{[n]\}$, so that \eqref{ineq:Frequencies} holds with equality. While this shows that the constant for this specific $k$ cannot be improved (if it holds at all), notice how \eqref{ineq:Frequencies} is very far off for all the other $l \in [n] \sm \{k\}$. This leads to a multitude of other related questions one could ask in this context.

\begin{q}
How do the families $\cF$ look where equality is achieved in \eqref{ineq:Frequencies} for some $k \in [n]$? Are there other extremal construction next to \eqref{eq:ExampleEquality}?
\end{q}

A family $\cF \sbse \cP(n)$ is called \emph{separating} if
\begin{align} \label{eq:Separating}
\{F \in \cF: x \in F\} \neq \{F \in \cF: y \in F\}
\end{align}
for all $x, y \in [n], x \neq y$. Notice how in general (for $k < n$) the construction \eqref{eq:ExampleEquality} is not separating.

\begin{q}
Can the right hand side of \eqref{ineq:Frequencies} be improved if in addition we assume $\cF$ to be separating?
\end{q}

%

We will now give some justification for Question \ref{que:UCSFrequencies}. The general idea would be an inductive proof of \eqref{ineq:Frequencies} via induction on $k$ with the induction start $k=n$. Our aim should thus be to prove Question \ref{que:UCSFrequencies} for large $k$ first. To do this, we use the following lemma.

\begin{lem} \label{lem:Frequencies}
Let $\cF \sbse \cP(n)$ be a nontrivial, union closed family and $x \in [n]$. For every $A \in \cF$ with $x \in A$ it holds
$$
\{F \in \cF: x \in F\} \geq \frac{1}{2^{\#A-1}+1} \cdot \#\cF.
$$
\end{lem}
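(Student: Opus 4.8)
The claim is really a statement about how $\cF$ splits according to $x$. The plan is to write $p \coloneqq \#\{F \in \cF : x \in F\}$ and $q \coloneqq \#\{F \in \cF : x \notin F\}$, so that $\#\cF = p + q$, and to observe that the asserted bound $p \geq \frac{1}{2^{\#A-1}+1}\#\cF$ is, after a one-line rearrangement, equivalent to
\[
q \leq 2^{\#A-1} \cdot p .
\]
So it suffices to produce a map from the $x$-free members of $\cF$ to the $x$-containing members of $\cF$ all of whose fibres have size at most $2^{\#A-1}$.

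The natural candidate combines union-closedness with the fixed set $A$: I would set
\[
\psi \colon \{F \in \cF : x \notin F\} \longrightarrow \{F \in \cF : x \in F\}, \qquad \psi(F) \coloneqq F \cup A .
\]
This is well defined because $\cF$ is union closed (so $F \cup A \in \cF$) and because $x \in A \sbse F \cup A$. Then $q$ equals the sum of the fibre sizes $\#\psi^{-1}(C)$ over $C$ in the image of $\psi$, and since that image is contained in $\{F \in \cF : x \in F\}$, a set of size $p$, the bound $q \leq 2^{\#A-1} p$ follows once every fibre is bounded by $2^{\#A-1}$.

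For the fibre bound, fix $C$ in the image; necessarily $A \sbse C$, hence $x \in C$. If $\psi(F) = C$, i.e. $F \cup A = C$, then $C \sm A \sbse F \sbse C$, so $F = (C \sm A) \cup (F \cap A)$ is completely determined by $F \cap A$, which is a subset of $A$. Since $x \in A$ while $x \notin F$, we must have $F \cap A \sbse A \sm \{x\}$, so there are at most $2^{\#(A \sm \{x\})} = 2^{\#A-1}$ possibilities for $F$ (membership in $\cF$ can only cut this down further). Summing over the at most $p$ possible values of $C$ gives $q \leq 2^{\#A-1} p$, which is the desired inequality.

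This is a one-step counting argument, so I do not expect a genuine obstacle; the only point worth isolating is that one must restrict the union map $F \mapsto F \cup A$ to the \emph{$x$-free} sets rather than apply it to all of $\cF$. Applying it to all of $\cF$ only yields the weaker $\#\cF \leq 2^{\#A}\cdot\#\{F \in \cF: x \in F\}$; discarding the single coordinate $x$ from the part of the fibre that is free to vary is exactly what upgrades the denominator from $2^{\#A}$ to $2^{\#A-1}+1$. Note also that nontriviality of $\cF$ plays no role in the lemma as stated — it only serves, in the intended application to \eqref{ineq:Frequencies}, to guarantee that some $A \in \cF$ contains $x$.
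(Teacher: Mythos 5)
Your proof is correct and is essentially identical to the paper's: both restrict the map $F \mapsto F \cup A$ to the $x$-free members of $\cF$ and bound each fibre by $2^{\#A-1}$ because $F$ is determined by its intersection with $A \sm \{x\}$. Your closing observation that nontriviality is not needed for the lemma itself is also accurate.
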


\begin{rem}
To get large lower bounds on the frequency of $x$ we would wish to choose a small set $A \in \cF$ containing $x$. The proof of the lemma will be analogous to the proof of the ``folklore'' theorem (see \cite{BS15}) that if $\cF$ is a nontrivial, union closed family with $\{x\} \in \cF$ (that is it contains a singleton) then $x$ is contained in at least half of all the sets of $\cF$.
\end{rem}

\begin{proof} [Proof of Lemma \ref{lem:Frequencies}]
Consider the surjective map
$$
\varphi: \{X \sbse [n]: x \notin X\} \ra \{Y \sbse [n]: A \sbse Y\}, X \mapsto X \cup A.
$$
We claim that for every $Y \sbse [n], A \sbse Y$ the fiber $\varphi^{-1}(Y)$ is of cardinality $2^{\# A -1}$. Indeed, it even holds that
$$
\varphi^{-1}(Y) = \{X \sbse [n]\sm\{x\}: X \cap ([n] \sm A) = Y \cap ([n] \sm A)\},
$$
which is clear by the elementary equivalence
$$
X \cup A = Y \quad \Lra \quad X \sm A = Y \sm A.
$$
Since $Y \cap ([n] \sm A)$ is a fixed set, we can only vary $X$ on $A \sm \{x\}$, so that
$$
\#\varphi^{-1}(Y) = 2^{\#(A \sm \{x\})} = 2^{\#A-1}.
$$
Let now $\cF$, $x$ and $A$ be as in the statement. Assume that $\{G \in \cF: x \notin G\} \neq \es$, otherwise the claim of the lemma is clear. Notice then, since $\cF$ is union closed, the above map $\varphi$ restricts to a map
$$
\varphi: \{G \in \cF: x \notin G\} \ra \{F \in \cF: x \in F\}, G \mapsto G \cup A.
$$
By the first part of the proof, for every $F \in \cF, x \in F$ there are at most $2^{\#A-1}$ many $G \in \cF, x \notin G$ with $G \cup A = F$. Thus
$$
\#\{G \in \cF: x \notin G\} \leq 2^{\#A-1} \cdot \#\{F \in \cF: x \in F\}
$$
and using $\#\{G \in \cF: x \notin G\} = \#\cF -\#\{F \in \cF: x \in F\}$ we proved the lemma.
\end{proof}

\begin{thm} \label{thm:Frequencies}
Question \ref{que:UCSFrequencies} holds for $k = n$ and $k = n-1$.
\end{thm}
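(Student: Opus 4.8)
The plan is to derive both cases from Lemma~\ref{lem:Frequencies}, using the ordering \eqref{ineq:Ordering} only to dispose of the single degenerate configuration that the lemma cannot reach on its own.

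The case $k=n$ should be immediate: since $\cF$ is nontrivial and union closed we have $[n]\in\cF$, so applying Lemma~\ref{lem:Frequencies} with $x=n$ and $A=[n]$ (of cardinality $n$) gives $\#\{F\in\cF:n\in F\}\ge \tfrac{1}{2^{n-1}+1}\#\cF$ at once; here the ordering is not even needed.

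For $k=n-1$ I would split according to the cardinality of the smallest member of $\cF$ containing $n-1$. If some $A\in\cF$ has $n-1\in A$ and $\#A\le n-1$, then Lemma~\ref{lem:Frequencies} with $x=n-1$ already yields $\#\{F\in\cF:n-1\in F\}\ge \tfrac{1}{2^{\#A-1}+1}\#\cF\ge \tfrac{1}{2^{n-2}+1}\#\cF$, because $\#A-1\le n-2$. In the remaining case every set of $\cF$ containing $n-1$ has cardinality $\ge n$, hence (as $[n]$ is the only subset of $[n]$ of size $\ge n$) $[n]$ is the unique member of $\cF$ containing $n-1$ and $\#\{F\in\cF:n-1\in F\}=1$. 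Now the ordering \eqref{ineq:Ordering} forces $\#\{F\in\cF:n\in F\}\le 1$, while nontriviality gives $\#\{F\in\cF:n\in F\}\ge 1$, so $[n]$ is also the unique set of $\cF$ containing $n$. Therefore every set of $\cF$ other than $[n]$ avoids both $n-1$ and $n$, i.e.\ is a subset of $[n-2]$, whence $\#\cF\le 2^{n-2}+1$ and the desired inequality $1\ge \tfrac{1}{2^{n-2}+1}\#\cF$ follows.

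The routine part is the bookkeeping with the exponents $2^{\#A-1}$; the one place that actually uses the hypotheses in a nontrivial way is the degenerate case of $k=n-1$, where the ordering lets ``$n-1$ is rare'' cascade into ``$n$ is rare'' and thus into a complete description of $\cF$ as a subfamily of $\cP([n-2])\cup\{[n]\}$. I expect the main obstacle to pushing this further — say to $k=n-2$ — to be precisely that this cascade breaks down: controlling the smallest set containing $k$ no longer reduces, via the ordering alone, to the clean alternative ``either it is small or it is all of $[n]$'', so a genuinely new argument rather than a counting trick would be needed.
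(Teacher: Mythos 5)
Your proof is correct. The $k=n$ case is identical to the paper's. For $k=n-1$, your argument is genuinely different from the paper's, though both ultimately reduce to applying Lemma~\ref{lem:Frequencies} with some $A\in\cF$ satisfying $n-1\in A$ and $\#A\le n-1$. The paper gets such an $A$ by first reducing (informally) to the case where $\cF$ is separating and then observing that the ordering~\eqref{ineq:Ordering} together with separation forces $\{F\in\cF:n-1\in F\}\setminus\{F\in\cF:n\in F\}$ to be nonempty. You instead dichotomize directly on whether such an $A$ exists; when it does not, you use the ordering and nontriviality to conclude $\cF\subseteq\cP(n-2)\cup\{[n]\}$ and verify the bound by counting. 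Your route avoids the reduction to the separating case altogether, which is a mild gain in rigor since that reduction (as stated in the paper) glosses over how the index $n-1$ tracks through the quotient ground set; your case split is entirely self-contained. The trade-off is that the paper's argument makes the role of separation explicit, which is thematically relevant to the later questions about separating families, whereas your degenerate case instead recovers the extremal example~\eqref{eq:ExampleEquality} (restricted to $\cP(n-2)\cup\{[n]\}$) as the obstruction, which is its own kind of insight.
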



\begin{proof} 
The case $k=n$ follows by applying Lemma \ref{lem:Frequencies} to $x = n$ and $A = [n]$. For $k = n-1$, we may assume $\cF$ to be separating (see \eqref{eq:Separating}). Otherwise, we may combine elements that cannot be separated and we get a separating, nontrivial, union closed family over a smaller ground set on which we could work instead. Since then
$$
\{F \in \cF: n-1 \in F\} \neq \{F \in \cF: n \in F\}
$$
but also
$$
\#\{F \in \cF: n-1 \in F\} \geq \#\{F \in \cF: n \in F\},
$$
there must be an
$$
A \in \{F \in \cF: n-1 \in F\} \sm \{F \in \cF: n \in F\}.
$$
Notice that $A \neq [n]$, so in particular $\#A \leq n-1$. Apply now Lemma \ref{lem:Frequencies} with $x = n-1$ and this $A$ to get the bound for $k=n-1$.
\end{proof}

To prove the statement of Question \ref{que:UCSFrequencies} it remains to proceed by induction, that is if the statement holds for $k > 1$ we would need to show it for $k-1$. It remains open how this could be done precisely.


\section{Weakenings of the Union Closed Sets Conjecture}

We now present some different ideas for the Union Closed Sets Conjecture \ref{conj:UCS}. We begin by introducing some general theory which we will later apply to study the structure of union closed families.

\subsection{Equivalent Structures}

To start, we recapitulate some of the already applied structures to investigate the Union Closed Sets Conjecture \ref{conj:UCS} (see \cite{BS15}). It is easy to show that a family $\cF \sbse \cP(n)$ is union closed if and only if $\cG \coloneqq \{[n] \sm F: F \in \cF\}$ is \emph{intersection closed} (that is if $A, B \in \cG$ also $A \cap B \in \cG$). Furthermore, an element $x$ is contained in at least half of all sets of $\cF$ if and only if it is contained in at most half of all sets of $\cG$. This observation gives the \emph{Intersection Closed Sets Conjecture}:

For every intersection closed family $\cG \sbse \cP(n)$ with $\bigcap \cG = \es$ (the nontriviality condition) there is an $x \in [n]$ with
$$
\#\{G \in \cG: x \in G\} \leq \frac{1}{2} \cdot \#\cG.
$$

Of course, we have not gained very much by this equivalent statement. However, other structures might be more useful, as described in \cite{BS15}. A family $\cG \sbse \cP(n)$ is called \emph{simply rooted} if for every $\es \neq G \in \cG$ there is an $x \in G$ with
\begin{align} \label{eq:Interval}
[x, G] \coloneqq \{X \sbse G: x \in X\} \sbse \cG.
\end{align}
It is straightforward to prove (see for example \cite{BBE13} Lemma 18) that $\cF \sbse \cP(n)$ is union closed if and only if $\cG \coloneqq \cP(n) \sm \cF$ is simply rooted. In a similar way to intersection closed sets above we also get an analogue of the Union Closed Sets Conjecture \ref{conj:UCS} for simply rooted families.

More akin to how we will use it below there is also an equivalent way to state the Union Closed Sets Conjecture \ref{conj:UCS} in the languages of lattices and even graphs (see \cite{BS15} for more details and references). In what follows, we will use some theory from \cite{CM03, Day92} and apply it to study the Union Closed Sets Conjecture \ref{conj:UCS}. We will adapt the terminology from \cite{CM03}. Let us start with the following consideration.

Let $\cF \sbse \cP(n)$ be a union closed family with $\es \in \cF$. For every set $X \sbse [n]$
\begin{align} \label{eq:InteriorOperator}
\tau(X) \coloneqq \bigcup \{F \in \cF: F \sbse X\} \in \cF
\end{align}
is the unique maximal set from $\cF$ contained in $X$. This gives a map
$$
\tau: \cP(n) \ra \cF
$$
with the properties
\begin{itemize}
\item [(i)] for all $X \in \cP$ it holds $\tau(X) \sbse X$ (\emph{exclusivity});
\item [(ii)] for all $X \sbse Y \sbse [n]$ it holds $\tau(X) \sbse \tau(Y)$ (\emph{monotonicity});
\item [(iii)] for all $X \sbse [n]$ it holds $\tau(\tau(X)) = \tau(X)$ (\emph{idempotence}).
\end{itemize}
A map $\tau: \cP(n) \ra \cP(n)$ fulfilling the above conditions (i - iii) is called an \emph{interior operator} (notice that \cite{CM03} uses the dual concept of a \emph{closure operator} as is more common in the order theoretic literature, but for union closed families it is more convenient to work with interior operators; all statements below will be taken from \cite{CM03} with according adjustments). It is not hard to show that for a given union closed $\es \in \cF \sbse \cP(n)$ the map $\tau$ from \eqref{eq:InteriorOperator} is an interior operator and that furthermore
$$
\Fix \tau \coloneqq \{X \sbse [n]: \tau(X) = X\} = \cF.
$$
In this way, union closed families containing the empty set are cryptomorphic to interior operators.

\begin{thm} \label{thm:UC<->IO}
Let $n \in \bbN$, the correspondence
\begin{align*}
\{\cF \sbse \cP(n): \es \in \cF \text{ union closed}\} & \ra \{\tau: \cP(n) \ra \cP(n) \text{ interior operator}\} \\
\cF & \mapsto \left(X \mapsto \bigcup\{F \in \cF: F \sbse X\}\right)
\end{align*}
is a bijection with inverse given by
$$
\tau \mapsto \Fix \tau.
$$
\end{thm}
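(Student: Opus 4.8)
The plan is to verify that the two displayed maps are mutually inverse, which breaks into four checks: (a) the forward map lands among interior operators; (b) the map $\tau \mapsto \Fix\tau$ lands among union closed families containing $\es$; (c) $\Fix \tau_\cF = \cF$ for every union closed $\es \in \cF \sbse \cP(n)$; and (d) $\tau_{\Fix\tau} = \tau$ for every interior operator $\tau$, where I abbreviate by $\tau_\cF$ the operator $X \mapsto \bigcup\{F \in \cF: F \sbse X\}$. Parts (a) and (c) are exactly the observations recorded in the paragraph preceding the statement: the crucial point is that $\tau_\cF(X) \in \cF$ always, being a finite (possibly empty) union of members of $\cF$, the empty case being covered precisely by the hypothesis $\es \in \cF$; combined with the fact that $F \in \cF$ is the largest member of $\cF$ contained in $F$, this yields both (a) and (c). I would only recall these briefly.

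For (b), let $\tau$ be an interior operator. Exclusivity gives $\tau(\es) \sbse \es$, hence $\tau(\es) = \es$ and $\es \in \Fix\tau$. If $A, B \in \Fix\tau$, then exclusivity gives $\tau(A \cup B) \sbse A \cup B$, while monotonicity applied to $A \sbse A \cup B$ and $B \sbse A \cup B$ gives $A = \tau(A) \sbse \tau(A \cup B)$ and $B = \tau(B) \sbse \tau(A \cup B)$, so $A \cup B \sbse \tau(A \cup B)$; thus $\tau(A\cup B) = A\cup B$, i.e.\ $A \cup B \in \Fix\tau$. (Note idempotence is not needed here.)

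For (d), fix an interior operator $\tau$ and a set $X \sbse [n]$. On one hand, idempotence gives $\tau(X) \in \Fix\tau$ and exclusivity gives $\tau(X) \sbse X$, so $\tau(X)$ is among the sets whose union defines $\tau_{\Fix\tau}(X)$, whence $\tau(X) \sbse \tau_{\Fix\tau}(X)$. On the other hand, for any $F \in \Fix\tau$ with $F \sbse X$, monotonicity gives $F = \tau(F) \sbse \tau(X)$; taking the union over all such $F$ gives $\tau_{\Fix\tau}(X) \sbse \tau(X)$. Hence $\tau_{\Fix\tau}(X) = \tau(X)$ for all $X$, so $\tau_{\Fix\tau} = \tau$. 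Together with (a)--(c) this shows the two maps are inverse bijections.

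Since every step reduces to a one-line application of one of the three axioms, I do not anticipate a genuine obstacle; the only places demanding a little care are the bookkeeping of the empty union in (a)/(c) — which is exactly where $\es \in \cF$ is used — and remembering that part (d) genuinely consumes all three axioms, exclusivity and idempotence for the inclusion $\tau(X) \sbse \tau_{\Fix\tau}(X)$ and monotonicity for the reverse one.
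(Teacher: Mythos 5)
Your proof is correct, and it actually goes further than the paper does: for Theorem~\ref{thm:UC<->IO} the paper gives no argument at all, deferring entirely to \cite{CM03}, Section~2.2, for what is a standard cryptomorphism between closure/interior operators and the families of their fixed sets. Your four-part decomposition (well-definedness of both maps, and the two compositions being the identity) is the natural one, and each step is verified correctly: in~(a)/(c) the key observation is that $\tau_\cF(X)$ is a nonempty finite union of members of $\cF$ (nonempty because $\es \in \cF$ and $\es \sbse X$), hence itself in $\cF$ and equal to the unique maximal member of $\cF$ below $X$; in~(b) exclusivity plus monotonicity alone already force $\Fix\tau$ to contain $\es$ and be closed under unions; and in~(d) idempotence is exactly what puts $\tau(X)$ into $\Fix\tau$ so that the inclusion $\tau(X) \sbse \tau_{\Fix\tau}(X)$ goes through, while monotonicity gives the reverse. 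Your closing remark that all three axioms are genuinely consumed, and that the hypothesis $\es \in \cF$ is consumed precisely in handling the bottom of the union, is a useful bit of bookkeeping that the paper's citation-only ``proof'' leaves implicit. In short: correct, complete, and a worthwhile unpacking of what the paper outsources.
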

\begin{proof}
See \cite{CM03} Section 2.2 and references therein.
\end{proof}

There does not seem to be a useful way to translate the Union Closed Sets Conjecture \ref{conj:UCS} into the language of interior operators. However, they will turn out useful in the study of the structure of union closed families. To go further into this direction we will continue to study interior operators. Let $\tau: \cP(n) \ra \cP(n)$ be an interior operator and set $\cF \coloneqq \Fix \tau$. For every $F \in \cF$ define
$$
\cT(F) \coloneqq \tau^{-1}(F) \sbse \cP(n).
$$
Then $\bP \coloneqq \{\cT(F): F \in \cF\}$ is a partition of $\cP(n)$ into $\#\cF$ classes. We call a partitioning of $\cP(n)$ that is obtained from an interior operator in this way a \emph{congruence partition}. Every partition implies a corresponding equivalence relation $\gamma$ given by
$$
X \gamma Y \quad \Lra \quad X, Y \in \cT(F) \text{ for some } F \in \cF.
$$
By the way $\bP$ is constructed out of $\tau$ we have
$$
X \gamma Y \quad \Lra \quad \tau(X) = \tau(Y).
$$
An equivalence relation $\gamma$ that is constructed out of an interior operator $\tau$ in the above manner will be called a \emph{congruence relation}. There is an intrinsic way to characterize congruence operator without mentioning any interior operators.

\begin{thm} \label{thm:CharacterizingCR}
Let $\gamma$ be an equivalence relation on $\cP(n)$. Then $\gamma$ is a congruence relation if and only if for all $A, B, C \sbse [n]$ it holds
\begin{align} \label{eq:CR}
A \gamma B \quad \Ra \quad (A \cap C) \gamma (B \cap C).
\end{align}
\end{thm}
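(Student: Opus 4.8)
The plan is to prove both directions of the equivalence, with the forward direction being routine and the converse being the substantive part.

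For the forward direction, suppose $\gamma$ is a congruence relation, so there is an interior operator $\tau$ with $A \gamma B \Lra \tau(A) = \tau(B)$, and $\cF = \Fix \tau$ is union closed with $\es \in \cF$. Given $A \gamma B$, i.e. $\tau(A) = \tau(B) =: F$, I must show $\tau(A \cap C) = \tau(B \cap C)$. Here I would use the explicit description $\tau(X) = \bigcup\{G \in \cF : G \sbse X\}$. The key observation is that $\tau(A \cap C) = \tau(\tau(A) \cap C) = \tau(F \cap C)$: indeed, any $G \in \cF$ with $G \sbse A \cap C$ satisfies $G \sbse A$, hence $G \sbse \tau(A) = F$, so $G \sbse F \cap C$; conversely $F \cap C \sbse A \cap C$ gives the reverse inclusion by monotonicity. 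The same computation with $B$ in place of $A$ yields $\tau(B \cap C) = \tau(F \cap C)$, and the two agree.

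For the converse, suppose $\gamma$ satisfies \eqref{eq:CR}. I need to manufacture an interior operator $\tau$ whose kernel is exactly $\gamma$. The natural candidate is $\tau(X) \coloneqq \bigcap \{Y \sbse [n] : X \gamma Y\}$, i.e. the intersection of the $\gamma$-class of $X$. First I would check this really computes a canonical representative, namely that $\tau(X) \gamma X$: writing the class of $X$ as $\{Y_1, \dots, Y_m\}$ and applying \eqref{eq:CR} repeatedly — from $Y_1 \gamma Y_2$ get $(Y_1 \cap Y_3) \gamma (Y_2 \cap Y_3)$, and iterating, building up $\bigcap_i Y_i$ by intersecting against successive classmates — one shows the intersection of the class is again in the class (this is where \eqref{eq:CR} does all the work, and I expect this bookkeeping to be the main obstacle: one must be careful that each intermediate intersection stays $\gamma$-equivalent to $X$, which follows because $Y \gamma X$ and $Z \gamma X$ imply $Y \gamma Z$ by transitivity and then $(Y \cap Z) \gamma (Z \cap Z) = Z \gamma X$). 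Granting $\tau(X) \gamma X$, exclusivity (i) is immediate since $\tau(X)$ is an intersection that includes $X$ itself; idempotence (iii) follows because $\tau(X) \gamma X$ forces $\tau(X)$ and $X$ to have the same $\gamma$-class, hence the same intersection. For monotonicity (ii), given $X \sbse Y$, I would use $\tau(X) \gamma X$ together with \eqref{eq:CR} applied with $C = Y$: since $X \sbse Y$ we have $X \cap Y = X$, so $\tau(X) \gamma X$ gives $(\tau(X) \cap Y) \gamma (X \cap Y) = X \gamma \tau(X)$; then $\tau(X) \cap Y$ lies in the class of $X$, so it contains $\tau(X)$, giving $\tau(X) \sbse Y$, and since $\tau(X) \sbse \tau(X)$ trivially... actually the cleanest route is: $\tau(X) \sbse Y$ just shown, and $\tau(X) \in$ class of $X$; one then argues $\tau(X) \subseteq \tau(Y)$ by noting $\tau(Y)$ is the intersection of $Y$'s class while $\tau(X)$ is contained in every member of $Y$'s class — this last point needs a short argument using \eqref{eq:CR} to relate the two classes, and is the second delicate spot.

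Finally, once $\tau$ is verified to be an interior operator, the congruence relation it induces is $X \sim Y \Lra \tau(X) = \tau(Y)$, and since $\tau(X)$ is precisely the distinguished element (the intersection) of the $\gamma$-class of $X$, we have $\tau(X) = \tau(Y) \Lra X$ and $Y$ have the same $\gamma$-class $\Lra X \gamma Y$. Hence $\gamma$ is the congruence relation of $\tau$, completing the proof. I would cite \cite{CM03} for the possibility that the monotonicity verification is packaged more slickly there, but the argument above via \eqref{eq:CR} is self-contained.
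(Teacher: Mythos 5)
Your proof is correct. The paper itself does not supply a proof of Theorem \ref{thm:CharacterizingCR}; it merely cites \cite{CM03}, Section~2.2, noting the duality (unions there, intersections here). So there is nothing in the paper to compare your argument against line by line, but your self-contained reconstruction is sound and is the natural one.

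A few remarks on the two spots you flag as delicate, just to confirm they close as you expect. That the intersection of a $\gamma$-class $\{Y_1,\dots,Y_m\}$ of $X$ stays in the class is a clean induction: if $P_k = \bigcap_{i\le k} Y_i$ is in the class, then $P_k \gamma Y_{k+1}$, and applying \eqref{eq:CR} with $C = Y_{k+1}$ gives $(P_k \cap Y_{k+1}) \gamma (Y_{k+1}\cap Y_{k+1}) = Y_{k+1} \gamma X$, so $P_{k+1}$ is again in the class. For monotonicity, after your step showing $\tau(X)\sbse Y$, the ``$\tau(X)$ is contained in every member of $Y$'s class'' claim is exactly one more application of \eqref{eq:CR}: for $Z \gamma Y$, take $C = \tau(X)$ to get $(Z \cap \tau(X)) \gamma (Y \cap \tau(X)) = \tau(X) \gamma X$, so $Z\cap\tau(X)$ lies in $X$'s class and therefore contains its intersection $\tau(X)$, giving $\tau(X)\sbse Z$; intersecting over all such $Z$ yields $\tau(X)\sbse\tau(Y)$. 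So both gaps fill in exactly along the lines you sketched, and the final identification of the induced congruence with $\gamma$ is immediate from the fact that $\tau(X)$ is a canonical representative of the $\gamma$-class of $X$.

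Your forward direction, via $\tau(A\cap C) = \tau(\tau(A)\cap C)$, is also fine; it is essentially the observation that an interior operator satisfies $\tau(X\cap C) = \tau(\tau(X)\cap C)$, which is the (dual of the) standard lemma used in \cite{CM03}. In short: correct proof, filling a gap the paper leaves to the literature.
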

\begin{proof}
See \cite{CM03} Section 2.2 and references therein. Notice that congruence relations there are defined in a dual way with unions instead of intersections. This is again due to the fact that we are interested in union closed families and not in intersection closed families.
\end{proof}

Notice that the above theorem gives another cryptomorphic way to study union closed families now via equivalence relations on $\cP(n)$ (and their implied partitions) fulfilling \eqref{eq:CR}. We will continue to study the structure of congruence partitions in further detail.

\begin{cor} \label{cor:CongruencePartition}
Let $\bP = \{P_1, ..., P_m\}$ be a congruence partition. Then every $P_i$ is intersection closed and for all $A, B \in P_i, A \sbse B$ it holds
$$
[A, B] \coloneqq \{X \sbse B: A \sbse X\} \sbse P_i.
$$
\end{cor}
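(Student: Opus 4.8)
The plan is to work directly with the interior operator $\tau$ whose fixed points are $\cF$ and whose fibers are the $P_i = \cT(F_i)$. Fix a class $P_i = \tau^{-1}(F_i)$. Then for every $X \in P_i$ we have $\tau(X) = F_i$, so in particular $F_i \sbse X$ by exclusivity (i), and $F_i \in P_i$ since $\tau(F_i) = F_i = \tau(X)$. Thus $F_i$ is the (unique) minimal element of $P_i$, which is the key fact I would isolate first. The two assertions of the corollary then reduce to showing that $P_i$ is closed upward within the interval $[F_i, \bigcup P_i]$ in an appropriate sense, and that it is closed under intersection.

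For intersection closure: take $A, B \in P_i$, so $\tau(A) = \tau(B) = F_i$. I want $\tau(A \cap B) = F_i$. Apply the characterizing condition \eqref{eq:CR} from Theorem \ref{thm:CharacterizingCR} (noting $\gamma$ here is exactly the relation $X \gamma Y \Lra \tau(X) = \tau(Y)$, which is a congruence relation by construction): from $A \gamma B$ we get $(A \cap C) \gamma (B \cap C)$ for every $C$. Choosing $C = A$ gives $A \gamma (B \cap A)$, i.e.\ $\tau(A \cap B) = \tau(A) = F_i$, so $A \cap B \in P_i$. This is clean and essentially immediate once the congruence condition is invoked with the right choice of $C$.

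For the interval statement: let $A, B \in P_i$ with $A \sbse B$, and take any $X$ with $A \sbse X \sbse B$. I need $\tau(X) = F_i$. On one hand, monotonicity (ii) gives $F_i = \tau(A) \sbse \tau(X) \sbse \tau(B) = F_i$, so in fact $\tau(X) = F_i$ directly — so $X \in P_i$. Actually this already settles it without even needing intersection closure, so I would present the interval claim via this monotonicity sandwich and present the intersection-closure claim separately via \eqref{eq:CR} as above. (One small point to double-check: the corollary's bracket notation $[A,B]$ differs slightly from the earlier $[x,G]$; since both $A,B \in P_i \sbse \cP(n)$ and $A \sbse B$, the set $\{X \sbse B : A \sbse X\}$ is a genuine interval in the Boolean lattice, so the monotonicity argument applies verbatim.)

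I do not anticipate a serious obstacle here: the entire proof is a short deduction from properties (i)–(iii) of $\tau$ plus one application of the congruence condition \eqref{eq:CR}. The only thing worth stating carefully is the preliminary observation that $F_i$ is the minimum of $P_i$, since it both justifies that $[F_i, \cdot]$-type intervals make sense and makes transparent why exclusivity and monotonicity combine to force every element between two members of $P_i$ back into $P_i$. If anything, the mild subtlety is purely notational — ensuring the reader sees that the relation $\gamma$ defined by $\tau(X) = \tau(Y)$ is the same congruence relation to which Theorem \ref{thm:CharacterizingCR} applies — and this was already established in the paragraph preceding the corollary.
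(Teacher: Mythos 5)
Your proof is correct, and it coincides with the paper's on the intersection-closure half: both apply \eqref{eq:CR} with $C=A$ to conclude $A\,\gamma\,(A\cap B)$, hence $A\cap B\in P_i$. Where you diverge is the interval claim. The paper stays entirely inside the congruence-relation formalism: given $A\sbse X\sbse B$ with $A,B\in P_i$, it applies \eqref{eq:CR} once more with $C=X$ to get $A=(A\cap X)\,\gamma\,(B\cap X)=X$, so $X\in P_i$. You instead step back to the interior operator $\tau$ and sandwich via monotonicity (ii): $F_i=\tau(A)\sbse\tau(X)\sbse\tau(B)=F_i$, forcing $\tau(X)=F_i$. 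Both arguments are equally short; yours makes the interval claim transparent from the order-theoretic side and does not need Theorem~\ref{thm:CharacterizingCR} at all for that half, while the paper's keeps the whole corollary as two parallel applications of the single condition \eqref{eq:CR}, which is tidier if one wants to treat the congruence relation as the primitive object. One small remark: your preliminary observation that $F_i=\tau(A)$ is the minimum of $P_i$ (which the paper records separately as Remark~\ref{rem:ClassesOfCP}) is conceptually clarifying but not actually used in either of your two steps, so you could omit it without loss.
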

\begin{proof}
Let $P_i$ be an equivalence class of a congruence partition and $\gamma$ the corresponding congruence relation. For the first part, notice that since $A, B \in P_i$, applying \eqref{eq:CR} with $C=A$ we get
$$
A \gamma (A \cap B),
$$
so that (since $A \in P_i$) $A \cap B \in P_i$. For the second part let $A \sbse B$ in $P_i$ and take $A \sbse X \sbse B$. Applying \eqref{eq:CR} with $C=X$ we get
$$
A = (A \cap X) \gamma (B \cap X) = X,
$$
so (since $A \in P_i$) $X \in P_i$.
\end{proof}

\begin{rem} \label{rem:ClassesOfCP}
By the above corollary every class $P_i$ of a congruence partition has a minimal element $X = \bigcap P_i$ and it holds that
$$
P_i = \bigcup_{Y \in P_i} [X, Y].
$$
If $\tau$ is the interior operator associated to the congruence partition, then $X = \tau(Y)$ for all $Y \in P_i$. In particular, the minimal elements of all equivalence classes are precisely the union closed family which generated the interior operator $\tau$ according to \eqref{eq:InteriorOperator}.
\end{rem}

For our purpose the following will be of importance.

\begin{lem} \label{lem:CongruenceEmbedding}
Let $\es \in \cF \sbse \cP(n)$ be a union closed family, $\tau: \cP(n) \ra \cP(n)$ the corresponding interior operator and $\bP = \{\cT(F): F \in \cF\}$ the corresponding congruence partition. Let $E, F \in \cF$ with $E \sbse F$, then $\#\cT(F) \leq \#\cT(E)$.
\end{lem}
\begin{proof}
We will prove that
$$
\iota = \iota_E^F: \cT(F) \ra \cT(E), X \mapsto X \sm (F \sm E)
$$
is an order embedding (that is $X \sbse Y$ if and only if $\iota(X) \sbse \iota(Y)$), in particular injective (see \cite{DP02}). From this the statement follows. \\
We should first show that $\iota$ is well defined, that is if $X \in \cT(F)$ then $X \sm (F \sm E) \in \cT(E)$. Indeed, apply \eqref{eq:CR} with $A = X$, $B = F$ and $C = [n] \sm (F \sm E)$. Since $X, F \in \cT(F)$ we thus get
$$
X \sm (F \sm E) \gamma F \sm (F \sm E) = E \in \cT(E),
$$
so that $X \sm (F \sm E) \in \cT(E)$ as desired. To check that $\iota$ is an order embedding simply note that for all $X, Y \in \cT(F)$ it holds (since $F \sm E \sbse X, Y$)
$$
X \sbse Y \quad \Lra \quad X \sm (F \sm E) \sbse Y \sm (F \sm E).
$$
\end{proof}

In particular we conclude the following.

\begin{cor} \label{cor:Ordering}
Let $\es \in \cF \sbse \cP(n)$ be a union closed family and let $\{\cT(F): F \in \cF\}$ be the corresponding congruence partition. Then there is a way to label the sets from $\cF = \{F_1, \dots, F_m\}$ in such a way that
\begin{itemize}
\item [(i)] $\#\cT(F_1) \leq \#\cT(F_2) \leq ... \leq \#\cT(F_m)$ and
\item [(ii)] if $F_i \spse F_j$ then $i \leq j$.
\end{itemize}
\end{cor}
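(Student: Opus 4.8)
The plan is to produce a single linear ordering of $\cF$ that realizes (i) and (ii) at once. Requirement (ii) by itself only asks for a linear extension of the partial order on $\cF$ given by reverse inclusion (larger sets first), and such an extension always exists by a topological sort; requirement (i) asks in addition that this extension be nondecreasing along the map $F \mapsto \#\cT(F)$. The two are compatible precisely because of Lemma \ref{lem:CongruenceEmbedding}: it says that $E \sbse F$ implies $\#\cT(F) \leq \#\cT(E)$, i.e. $F \mapsto \#\cT(F)$ is order-reversing for inclusion. Hence a set can never be forced to appear before a proper superset of it while simultaneously carrying a strictly smaller $\#\cT$-value, which is exactly the potential conflict one has to rule out.

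Concretely, I would bucket $\cF$ by the value of $\#\cT$: for each integer $v$ set $\cF_v \coloneqq \{F \in \cF : \#\cT(F) = v\}$, so the nonempty $\cF_v$ partition $\cF$. List the nonempty buckets in strictly increasing order of $v$, say $\cF_{v_1}, \cF_{v_2}, \dots$ with $v_1 < v_2 < \cdots$. Inside a fixed bucket $\cF_{v_j}$, restrict the reverse-inclusion relation and choose a linear extension of it, i.e. order the members of $\cF_{v_j}$ so that whenever one contains another the containing set comes first. Finally concatenate these lists bucket by bucket to obtain the labelling $\cF = \{F_1, \dots, F_m\}$.

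It then remains to verify (i) and (ii). For (i), $\#\cT$ is constant on each bucket and strictly increases from one bucket to the next, so $\#\cT(F_1) \leq \cdots \leq \#\cT(F_m)$. For (ii), suppose $F_i \spse F_j$; by Lemma \ref{lem:CongruenceEmbedding}, $\#\cT(F_i) \leq \#\cT(F_j)$. If this is strict, $F_i$ lies in an earlier bucket than $F_j$, so $i < j$; if it is an equality, $F_i$ and $F_j$ lie in the same bucket, and since $F_i \spse F_j$ the chosen within-bucket linear extension lists $F_i$ no later than $F_j$, so $i \leq j$.

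I do not expect a real obstacle here: the entire content of the statement is the compatibility of the two orderings, and this is supplied by the monotonicity in Lemma \ref{lem:CongruenceEmbedding}; everything else is the routine fact that a monotone integer-valued function on a finite poset admits a linear extension that is nondecreasing along it.
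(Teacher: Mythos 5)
Your proof is correct and takes essentially the same approach as the paper: both sort primarily by $\#\cT$-value (nondecreasing) and break ties by reverse inclusion, with Lemma \ref{lem:CongruenceEmbedding} supplying the compatibility of the two orderings. The paper phrases this as a greedy one-set-at-a-time selection (repeatedly pick a maximal set of minimal $\#\cT$-value among those remaining), while you present it as explicit bucketing by $\#\cT$ followed by within-bucket linear extensions, but these produce the same class of orderings for the same reason.
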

\begin{proof}
Define the ordering $F_1, \dots, F_m$ by setting $F_1$ to be the unique (by union closedness) maximal set of $\cF$ and, having $F_1, \dots, F_i$ already defined, choosing $F_{i+1} = F$ to be a maximal set from $F \in \cF \sm \{F_1, \dots, F_i\}$ such that $\#\cT(F)$ is of minimal cardinality. The claimed properties (i) and (ii) then follow from Lemma \ref{lem:CongruenceEmbedding} and the given construction respectively.
\end{proof}

\subsection{Up-sets}

An \emph{up-set} (also called \emph{increasing family}) is a family $\cU \sbse \cP(n)$ such that if $A \sbse B \sbse [n]$ with $A \in \cU$, then also $B \in \cU$. That is, up-sets are closed under taking super sets. It is not new to study union closed sets using up-sets via \emph{up-compression techniques} (see \cite{BS15} for details). We will give some insight into this technique, also to demonstrate why the considerations below use up-sets in a nouvelle way. Clearly, every up-set is union closed and it is not hard to show that the Union Closed Sets Conjecture \ref{conj:UCS} holds for all up-sets. By trying to add elements to the sets from a given nontrivial, union closed family one tries to construct an up-set that is easier to work with than general union closed families, but still gives information about the original union closed family.

For what follows, up-sets will play a different roll. Let $\cF \sbse \cP(n)$ be a union closed family and notice that for every $x \in [n]$ we can write
$$
\{F \in \cF: x \in F\} = \cF \cap [x, [n]]
$$
(see \eqref{eq:Interval}). Notice that $[x, [n]]$ is an up-set of cardinality $2^{n-1}$. Thus the following, which we will prove next using the theory developed in the previous section, is a weakening of the Union Closed Sets Conjecture \ref{conj:UCS}.

\begin{thm} \label{thm:WeakUCS}
Let $\cF \sbse \cP(n)$ be a nontrivial, union closed family. There is an up-set $\cU \sbse \cP(n)$ with $\#\cU \leq 2^{n-1}$ such that
$$
\#(\cF \cap \cU) \geq \frac{1}{2} \cdot \#\cU.
$$
\end{thm}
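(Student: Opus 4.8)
The plan is to reduce to the case $\es \in \cF$ and then take $\cU$ to be a union of the smallest classes of the congruence partition attached to $\cF$, with the number of classes chosen via Corollary~\ref{cor:Ordering}. For the reduction, assume $n \geq 1$ (for $n=0$ the statement is vacuous, with $\cU = \es$). The family $\cF \cup \{\es\}$ is still nontrivial and union closed, and any up-set $\cU$ with $\es \in \cU$ must be all of $\cP(n)$, since $\es \sbse B$ for every $B \sbse [n]$; hence whenever $\#\cU \leq 2^{n-1} < 2^{n}$ we have $\es \notin \cU$, so $\cF \cap \cU = (\cF \cup \{\es\}) \cap \cU$. Thus it suffices to prove the statement assuming $\es \in \cF$.

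So let $\es \in \cF$, let $\tau$ be the associated interior operator, $\bP = \{\cT(F) : F \in \cF\}$ the corresponding congruence partition, and label $\cF = \{F_1, \dots, F_m\}$ as in Corollary~\ref{cor:Ordering}; writing $a_i \coloneqq \#\cT(F_i)$, the sequence $(a_i)_{i=1}^m$ is nondecreasing and $F_i \spse F_j$ implies $i \leq j$. Note that $F_1 = [n]$ and $\cT(F_1) = \{[n]\}$, so $a_1 = 1$, and $\sum_{i=1}^{m} a_i = 2^{n}$. For $1 \leq k \leq m$ put $\cU_k \coloneqq \cT(F_1) \cup \dots \cup \cT(F_k)$. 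Then $\cU_k$ is an up-set: if $X \in \cU_k$ and $X \sbse Y$, then $\tau(X) = F_i$ for some $i \leq k$, so by monotonicity $F_i \sbse \tau(Y) \eqqcolon F_j$, whence $j \leq i \leq k$ by part (ii) of Corollary~\ref{cor:Ordering}, and therefore $Y \in \cT(F_j) \sbse \cU_k$. Furthermore $F_i \in \cT(F_i)$ because $\tau(F_i) = F_i$, while any $G \in \cF \cap \cU_k$ lies in some $\cT(F_i)$ with $i \leq k$ and satisfies $G = \tau(G) = F_i$; hence $\cF \cap \cU_k = \{F_1, \dots, F_k\}$, so $\#(\cF \cap \cU_k) = k$ whereas $\#\cU_k = S_k \coloneqq a_1 + \dots + a_k$.

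It remains to choose $k$ so that $S_k \leq 2^{n-1}$ and $2k \geq S_k$ hold simultaneously; then $\cU \coloneqq \cU_k$ is as required, since $\#(\cF \cap \cU) = k \geq \tfrac12 S_k = \tfrac12 \#\cU$. Consider $g(k) \coloneqq 2k - S_k$ for $0 \leq k \leq m$. Then $g(0) = 0$ and the increments $g(k) - g(k-1) = 2 - a_k$ are nonincreasing, so $g$ is discretely concave and lies on or above the chord joining $(0,0)$ to $(k, g(k))$ for each $k$; consequently $\{k : g(k) \geq 0\}$ is an initial segment $\{0, 1, \dots, k_1\}$, with $k_1 \geq 1$ because $g(1) = 2 - a_1 = 1$. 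Let also $k^{*} \coloneqq \max\{k : S_k \leq 2^{n-1}\}$, which is well defined and $\geq 1$ since $S_1 = 1 \leq 2^{n-1}$. Then $k \coloneqq \min(k_1, k^{*})$ works: $S_k \leq S_{k^{*}} \leq 2^{n-1}$ because $S$ is increasing, and $g(k) \geq 0$ because $k \leq k_1$.

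The one step carrying real content is the last: forcing the two inequalities $S_k \leq 2^{n-1}$ and $2k \geq S_k$ onto a single $k$. What makes this possible is exactly the monotonicity of the fiber sizes $a_i$ — which is where Lemma~\ref{lem:CongruenceEmbedding} enters, via Corollary~\ref{cor:Ordering} — since it is this monotonicity that makes $k \mapsto 2k - S_k$ concave; everything else is bookkeeping. I would also point out that the crude choice $k = 1$, i.e.\ $\cU = \{[n]\}$, already satisfies the conclusion, so the substance of the congruence-partition argument is really that one may take $\cU$ as large as $\min(k_1, k^{*})$ of the classes allow.
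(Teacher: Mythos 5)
Your argument proves the theorem exactly as printed, and your closing remark is the right one to make: with the conclusion written as $\#(\cF\cap\cU)\geq\tfrac12\#\cU$, the choice $\cU=\{[n]\}$ already works, so the statement is vacuously satisfied before any of the congruence-partition machinery is invoked. This strongly suggests a typo in the statement. The version that matches the motivating heuristic (the Union Closed Sets Conjecture asserts $\#(\cF\cap[x,[n]])\geq\tfrac12\#\cF$ for some $x$, with $[x,[n]]$ an up-set of size $2^{n-1}$), that is what Theorem~\ref{thm:UpsetIntersections} actually delivers at $t=2$, and that is used in the proof of Theorem~\ref{thm:FreqencyBound} to extract $\cE\sbse\cF$ with $\#\cE\geq m/2$, is
\[
\#(\cF\cap\cU)\geq\frac12\cdot\#\cF .
\]
Your construction does not give this stronger inequality. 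Choosing $k=\min(k_1,k^*)$ guarantees $k\geq\tfrac12 S_k$ but not $k\geq\tfrac{m}{2}$. Concretely, for the chain $\cF=\{\es,\{1\},\{1,2\},\dots,[n]\}$ (so $m=n+1$) the fiber sizes in nondecreasing order are $1,1,2,4,\dots,2^{n-2},2^{n-1}$, hence $S_k=2^{k-1}$ for $1\leq k\leq n$, and $g(k)=2k-2^{k-1}$ turns negative at $k=5$; thus $k_1=4$ while $m/2=(n+1)/2$ is unbounded, so your $\cU$ meets $\cF$ in only $4$ sets, far fewer than $m/2$.

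The paper's proof of Theorem~\ref{thm:UpsetIntersections} avoids this by fixing $k=\lceil m/2\rceil$ at the outset (so $\#(\cF\cap\cU_k)=k\geq m/2$ is automatic) and then using the monotonicity of the fiber sizes --- essentially the same concavity you exploit --- to establish the one inequality that then carries content, namely $S_{\lceil m/2\rceil}\leq 2^{n-1}$; that is the role of Lemma~\ref{lem:Sum}. Everything else in your write-up (the reduction to $\es\in\cF$, identifying $\cU_k$ as an up-set via Corollary~\ref{cor:Ordering}(ii), and $\cF\cap\cU_k=\{F_1,\dots,F_k\}$) matches the paper's proof and is correct. So the mathematics you carry out is sound; the issue is that the target you hit is trivial, and the intended, nontrivial target requires committing to $k=\lceil m/2\rceil$ rather than optimizing $k$ against $S_k$.
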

\begin{rem}
It should be noted that the $n$ up-sets of the form $[x, [n]]$ only make up a vanishingly small part out of all up-sets in $\cP(n)$ of cardinality at most $2^{n-1}$. We therefore hope that these ideas can be improved upon to get better results regarding the Union Closed Sets Conjecture \ref{conj:UCS} in the future. We will also demonstrate how the above result can be used to get a statement about the frequency of the most common element among $\cF$ in the next section.
\end{rem}

For the proof we need an elementary lemma.

\begin{lem} \label{lem:Sum}
Let $0 \leq n_1 \leq \dots \leq n_m$ be real numbers, set $N = \sum_{i=1}^m n_i$ and let $\vartheta \in [0, 1]$. Then
$$
\vartheta N \geq \sum_{i=1}^{\lfloor \vartheta m\rfloor} n_i.
$$
\end{lem}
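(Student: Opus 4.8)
The plan is to prove the inequality $\vartheta N \geq \sum_{i=1}^{\lfloor \vartheta m\rfloor} n_i$ by comparing the partial sum of the $\lfloor \vartheta m\rfloor$ smallest terms against the average value $N/m$ of all $m$ terms. Write $k \coloneqq \lfloor \vartheta m\rfloor$, so that $0 \leq k \leq m$ and $k \leq \vartheta m$. Since the sequence is sorted in nondecreasing order, each of the first $k$ terms is at most the average of the whole sequence; more precisely, I would establish the key inequality
$$
\sum_{i=1}^k n_i \leq \frac{k}{m} \cdot N.
$$
Granting this, the conclusion is immediate: $\frac{k}{m} N \leq \frac{\vartheta m}{m} N = \vartheta N$, using $k \leq \vartheta m$ and $N \geq 0$.

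To prove the key inequality $\sum_{i=1}^k n_i \leq \frac{k}{m} N$, I would clear denominators and show equivalently that $m \sum_{i=1}^k n_i \leq k \sum_{i=1}^m n_i$, i.e.
$$
(m-k) \sum_{i=1}^k n_i \leq k \sum_{i=k+1}^m n_i.
$$
Now the left-hand side is a sum of $k(m-k)$ terms each of the form $n_i$ with $i \leq k$, and the right-hand side is a sum of $k(m-k)$ terms each of the form $n_j$ with $j \geq k+1$. Pairing them up (each $n_i$ with $i \leq k$ appears $m-k$ times on the left, each $n_j$ with $j \geq k+1$ appears $k$ times on the right, giving $k(m-k)$ pairs on each side), we have $n_i \leq n_j$ for every such pair because $i \leq k < k+1 \leq j$ and the sequence is nondecreasing. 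Summing the pairwise inequalities yields the desired bound. The edge cases $k = 0$ (empty sum on the left, inequality reads $0 \leq \vartheta N$, true since $n_i \geq 0$) and $k = m$ (then $\vartheta m \geq m$ forces $\vartheta = 1$, and the inequality reads $N \geq N$) are handled automatically by this argument but could be mentioned for clarity.

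There is no serious obstacle here; the only thing to be careful about is the bookkeeping in the pairing argument and the fact that we must use $n_i \geq 0$ (together with $\vartheta \geq 0$) to handle the case $k=0$ and, more generally, to pass from $\frac{k}{m}N \leq \vartheta N$ — this step needs $N \geq 0$, which follows from $n_1 \geq 0$. An alternative phrasing that avoids the explicit pairing is to note that $\frac{1}{k}\sum_{i=1}^k n_i$ is the average of the $k$ smallest terms and $\frac{1}{m}\sum_{i=1}^m n_i$ is the average of all $m$ terms, so the former is at most the latter because the sorted order guarantees that replacing the average of a prefix by the average of the whole can only increase it; but I prefer the pairing version since it is completely elementary and self-contained.
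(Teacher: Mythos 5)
Your proof is correct, and it establishes the same key intermediate inequality as the paper, namely $\sum_{i=1}^{k} n_i \leq \frac{k}{m} N$ with $k = \lfloor \vartheta m \rfloor$, from which $\frac{k}{m}N \leq \vartheta N$ follows by $k \leq \vartheta m$ and $N \geq 0$. The route to that inequality is genuinely different, though. The paper encodes the sequence as a step function $f(x) = n_{\lceil x \rceil}$ on $(0,m]$, compares $f(x) \leq f\bigl(\tfrac{m}{k} x\bigr)$ on $(0,k]$ using monotonicity, and then performs the substitution $y = \tfrac{m}{k} x$ to deduce $\sum_{i=1}^{k} n_i = \int_0^k f \leq \tfrac{k}{m}\int_0^m f = \tfrac{k}{m} N$ (treating $k=0$ as a trivial base case). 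You instead clear denominators and reduce the claim to $(m-k)\sum_{i\leq k} n_i \leq k \sum_{j>k} n_j$, then sum the $k(m-k)$ pairwise inequalities $n_i \leq n_j$ for $i \leq k < j$. Your version is entirely finite and combinatorial, needing no integrals or substitutions, which makes it more self-contained and arguably cleaner for a statement that is purely about finite sums; the paper's version has the minor aesthetic advantage of packaging the monotonicity comparison in a single integral estimate. Both proofs use the hypothesis $n_i \geq 0$ only to ensure $N \geq 0$ in the final step $\frac{k}{m}N \leq \vartheta N$ and to dispose of the $k=0$ case, a point you handle explicitly and correctly.
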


Here $\lfloor \cdot \rfloor$ denotes the \emph{floor-function}. In the proof we will also use the \emph{ceiling-function} $\lceil \cdot \rceil$.

\begin{proof}
The statement is clear for $\vartheta < \frac{1}{m}$, since then the right hand side is zero. Suppose $\vartheta \geq \frac{1}{m}$, define the function $f: (0, m] \ra \bbR, f(x) \coloneqq n_{\lceil x \rceil}^{}$. By monotonicity for the $n_i$'s the function $f$ is also monotonically increasing. Thus, since $\frac{m}{\lfloor \vartheta m \rfloor} \geq 1$, we have
$$
f(x) \leq f \left(\frac{m}{\lfloor \vartheta m \rfloor} \cdot x\right)
$$
for $x \in (0, \lfloor \vartheta m \rfloor]$. By the substitution $y = \frac{m}{\lfloor \vartheta m \rfloor} \cdot x$ we get
\begin{align*}
\sum_{i=1}^{\lfloor \vartheta m\rfloor} n_i & = \int_0^{\lfloor \vartheta m\rfloor} f(x) dx \leq \int_0^{\lfloor \vartheta m\rfloor} f \left(\frac{m}{\lfloor \vartheta m \rfloor} \cdot x\right) dx = \int_0^m f(y) \cdot \frac{\lfloor \vartheta m \rfloor}{m} dy \\
 & = \frac{\lfloor \vartheta m \rfloor}{m} \cdot \sum_{i=1}^m n_i \leq \vartheta N.
\end{align*}
\end{proof}

We can now proof Theorem \ref{thm:WeakUCS} in a more general version.

\begin{thm} \label{thm:UpsetIntersections}
Let $n, t \in \bbN$ and let $\cF \sbse \cP(n)$ be a nontrivial, union closed. Then there is an up-set $\cU \sbse \cP(n)$ with $\#\cU \leq \lceil 2^n / t \rceil$ and
$$
\#(\cF \cap \cU) \geq \frac{1}{t} \cdot \#\cF.
$$
\end{thm}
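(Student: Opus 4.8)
The plan is to exploit the congruence-partition machinery of the previous subsection: cut $\cP(n)$ into the classes $\cT(F)$ and take as $\cU$ the union of the "smallest" classes.

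First I would dispose of trivialities and reduce to $\es \in \cF$. The case $t=1$ is settled by $\cU = \cP(n)$, and $\#\cF = 1$ (which, once $\es$ is adjoined, means $n=0$) by $\cU = \{[n]\}$, so assume $t \ge 2$ and $m := \#\cF \ge 2$. If $\es \notin \cF$, replace $\cF$ by $\cF \cup \{\es\}$, which is still nontrivial and union closed and strictly larger; the up-set produced below will omit the class of $\es$, so $\#(\cF \cap \cU)$ is unaffected while $\tfrac1t\#(\cF\cup\{\es\}) > \tfrac1t\#\cF$, and the statement transfers back. Now let $\tau$ be the interior operator with $\Fix\tau = \cF$, let $\bP = \{\cT(F): F \in \cF\}$ be the associated congruence partition, and use Corollary \ref{cor:Ordering} to label $\cF = \{F_1,\dots,F_m\}$ so that $\#\cT(F_1) \le \dots \le \#\cT(F_m)$ and $F_i \spse F_j \Rightarrow i \le j$. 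For $1 \le k \le m$ set $\cU_k := \bigcup_{i=1}^k \cT(F_i)$.

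The key step is to observe that each $\cU_k$ is an up-set. Indeed, if $X \in \cT(F_i)$ with $i \le k$ and $X \sbse X'$, then monotonicity of $\tau$ gives $F_i = \tau(X) \sbse \tau(X') =: F_j$, so $F_j \spse F_i$, hence $j \le i \le k$ by property (ii) of the labelling, i.e. $X' \in \cT(F_j) \sbse \cU_k$. Moreover each $F \in \cF$ lies in $\cT(F)$ and in no other class, so $\cF \cap \cU_k = \{F_1,\dots,F_k\}$ and $\#(\cF \cap \cU_k) = k$. Choosing $k := \lceil \#\cF/t\rceil$ then gives $\#(\cF \cap \cU_k) = k \ge \tfrac1t\#\cF$, and it remains only to bound $\#\cU_k = \sum_{i=1}^k \#\cT(F_i)$.

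For this, write $n_i := \#\cT(F_i)$: these are non-decreasing, $\sum_{i=1}^m n_i = \#\cP(n) = 2^n$ since the $\cT(F_i)$ partition $\cP(n)$, and crucially $n_1 = \#\cT([n]) = 1$, because $\tau(X) = [n]$ forces $X = [n]$. Splitting off this first term and applying Lemma \ref{lem:Sum} to the non-decreasing sequence $n_2 \le \dots \le n_m$ (of sum $2^n - 1$) with $\vartheta = \tfrac{k-1}{m-1} \in [0,1)$ gives $\sum_{i=2}^{k} n_i \le \tfrac{k-1}{m-1}(2^n-1)$; and the elementary inequality $t(k-1) \le m-1$, a consequence of $\lceil m/t\rceil \le (m+t-1)/t$, yields $\tfrac{k-1}{m-1} \le \tfrac1t$. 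Hence $\#\cU_k = 1 + \sum_{i=2}^k n_i \le 1 + \tfrac{2^n-1}{t} = \tfrac{2^n+t-1}{t}$, and since $\#\cU_k$ is an integer this is at most $\lceil 2^n/t\rceil$, as required.

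I expect the only genuinely non-bookkeeping point to be the up-set observation: recognising that the unions of the smallest congruence classes are precisely the up-sets one should test against $\cF$. This is where the congruence-partition viewpoint earns its keep, and it relies on combining monotonicity of $\tau$ with the compatibility property (ii) of the ordering in Corollary \ref{cor:Ordering}. Everything else is a matter of choosing $k$ correctly, invoking Lemma \ref{lem:Sum}, and being careful with the floor/ceiling arithmetic (in particular using $\#\cT([n]) = 1$ to absorb the rounding loss) together with the harmless reduction to $\es \in \cF$.
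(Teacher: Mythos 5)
Your proof is correct and follows essentially the same strategy as the paper: form the congruence partition $\{\cT(F): F \in \cF\}$, order its blocks as in Corollary~\ref{cor:Ordering}, take $\cU$ to be the union of the first $\lceil m/t\rceil$ blocks, and bound $\#\cU$ with Lemma~\ref{lem:Sum} together with the observation $\#\cT([n])=1$. One place your write-up is actually more precise than the paper's: your verification that $\cU$ is an up-set correctly combines monotonicity of $\tau$ with property~(ii) of Corollary~\ref{cor:Ordering}, whereas the paper cites property~(i), which by itself would not rule out a bad ordering among blocks of equal cardinality; your explicit handling of the reduction to $\es \in \cF$ (noting $\es$'s class is never selected) is likewise a small but welcome clarification.
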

\begin{proof}
We may assume $\es \in \cF$. Order $\#\cF = \{F_1, \dots, F_m\}$ as in Corollary \ref{cor:Ordering}. We claim that $\cU \coloneqq \bigcup_{i=1}^{\lceil m/t \rceil} \cT(F_i)$ does the job. First, we have $\cF \cap \cU = \{F_1, \dots, F_{\lceil m/t \rceil}\}$ so $\#(\cF \cap \cU) \geq \frac{1}{t} \cdot \#\cF$. It remains to show that $\cU$ is an up-set with
$$
\#\cU \leq \left\lceil \frac{1}{t} \cdot 2^n \right\rceil.
$$
The fact that $\cU$ is an up-set stems from the fact $\{\cT(F_1), ..., \cT(F_{\lceil m/t \rceil})\}$ is a partition of $\cU$ fulfilling property (i) from Corollary \ref{cor:Ordering}. To bound $\#\cU$ notice that $F_1 = [n]$ by nontriviality of $\cF$. Setting $f_i = \#\cT(F_i)$ we thus have $1 = f_1 \leq f_2 \leq \dots \leq f_m$ and
$$
\sum_{i=2}^m f_i = 2^n-1.
$$
Applying Lemma \ref{lem:Sum} to $n_1 = f_2, \dots, n_{m-1} = f_m$ with $\vartheta = \frac{\lceil\frac{m}{t}\rceil - 1}{m-1}$ we get
\begin{align*}
\#\cU & = \sum_{i=1}^{\lceil m/t \rceil} f_i = 1 + \sum_{i=1}^{\lceil m/t \rceil-1} n_i \leq 1 + \frac{\lceil\frac{m}{t}\rceil - 1}{m-1} \cdot \sum_{i=1}^{m-1} n_i \\
 & \leq 1+ \frac{\frac{m+t-1}{t}-1}{m-1} (2^n-1) = 1 + \frac{1}{t} \cdot (2^n-1) < \frac{1}{t} \cdot 2^n + 1,
\end{align*}
but since $\#\cU$ is an integer we get the desired bound.
\end{proof}

Setting $t=2$ yields Theorem \ref{thm:WeakUCS}.

\subsection{A Demonstration}

We demonstrate how one can apply Theorem \ref{thm:WeakUCS} to get a bound about the frequency of the most frequent element among a nontrivial, union closed family $\cF$. The here proven bound is unfortunately worse than all already known bounds which we will collect further below. The author hopes that these results still turn out fruitful in the further development of the Union Closed Sets Conjecture \ref{conj:UCS}. The following result might be interesting on its own.

\begin{thm} \label{thm:IntersectingInUpset}
Let $n \in \bbN, n \geq 2$ and $\cU \sbse \cP(n)$ be an upset with $\#\cU \leq 2^{n-1}$. Let $C = 1+e^{-1} = 1.367\dots$ and $t \in \bbN, t \geq \frac{Cn}{\log_2 n}$. For any collection of sets $A_1, \dots, A_t \in \cU$ there are indices $i < j$ with $A_i \cap A_j \neq \es$.
\end{thm}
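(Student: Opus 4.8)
The plan is to argue by contradiction: assume $t\ge Cn/\log_2 n$ and that $A_1,\dots,A_t\in\cU$ satisfy $A_i\cap A_j=\es$ for all $i\ne j$, and derive a contradiction. The opening moves are bookkeeping. Since $\cU$ is an up-set and $\#\cU\le 2^{n-1}<2^n$, we cannot have $\es\in\cU$ (that would force $\cU=\cP(n)$), so every $A_i$ is nonempty; set $a_i\coloneqq\#A_i\ge 1$. As the $A_i$ are pairwise disjoint subsets of $[n]$ we get $\sum_{i=1}^t a_i\le n$, and because $\cU$ is an up-set it contains $[A_i,[n]]$ for each $i$, hence it contains $\bigcup_{i=1}^t[A_i,[n]]$.

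The crucial step is that disjointness lets us compute the size of this union exactly. For a uniformly random $X\sbse[n]$ the events $\{A_i\sbse X\}$ are mutually independent (they involve disjoint coordinate sets) and each has probability $2^{-a_i}$, so
$$\#\Bigl(\bigcup_{i=1}^t[A_i,[n]]\Bigr)=2^n\Bigl(1-\prod_{i=1}^t(1-2^{-a_i})\Bigr).$$
Since the left-hand side is at most $\#\cU\le 2^{n-1}$, this gives $\prod_{i=1}^t(1-2^{-a_i})\ge\tfrac12$; taking logarithms and using the strict estimate $\ln(1-x)<-x$ for $x\in(0,1)$ turns it into $\sum_{i=1}^t 2^{-a_i}<\ln 2$. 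The problem is now reduced to the elementary assertion that positive reals $a_1,\dots,a_t$ with $\sum a_i\le n$ and $\sum 2^{-a_i}<\ln 2$ must satisfy $t<Cn/\log_2 n$.

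To close this I would combine the two constraints with a single Lagrange-type multiplier $\lambda>0$. The function $a\mapsto a+\lambda 2^{-a}$ is convex on $\bbR$ with minimizer $a=\log_2(\lambda\ln 2)$ and minimum value $\log_2(\lambda\ln 2)+\tfrac{1}{\ln 2}$; hence for each $i$ (as $a_i\in\bbR$) we have $a_i+\lambda 2^{-a_i}\ge\log_2(\lambda\ln 2)+\tfrac{1}{\ln 2}$, and summing over $i$,
$$t\Bigl(\log_2(\lambda\ln 2)+\tfrac{1}{\ln 2}\Bigr)\le\sum_{i=1}^t a_i+\lambda\sum_{i=1}^t 2^{-a_i}<n+\lambda\ln 2.$$
Now choose $\lambda\coloneqq n/(e\ln 2)$: using $\tfrac{1}{\ln 2}=\log_2 e$ the factor on the left collapses to $\log_2(n/e)+\log_2 e=\log_2 n$, while the right-hand side is $n+n/e=Cn$. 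So $t\log_2 n<Cn$, i.e.\ $t<Cn/\log_2 n$, contradicting the hypothesis.

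I expect the one genuinely non-routine point to be the choice of the weight $\lambda$. Balancing ``many disjoint sets forces small sets'' (via $\sum a_i\le n$) against ``$\sum 2^{-a_i}<\ln 2$'' by a cruder device — for instance splitting the $a_i$ at a threshold $\theta$ and bounding the number of small indices by $2^\theta\ln 2$ and of large ones by $n/\theta$ — already yields the correct order $n/\log_2 n$, but with a suboptimal constant. It is precisely the value $\lambda=n/(e\ln 2)$, together with the identity $\tfrac{1}{\ln 2}=\log_2 e$, that is tuned so the left factor equals $\log_2 n$ exactly while $n+\lambda\ln 2=(1+e^{-1})n$, which is what produces the stated constant $C=1+e^{-1}$ and, because the minimization is done over all of $\bbR$, makes the argument work uniformly for every $n\ge 2$. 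Everything else — the union-size formula, the logarithm inequality, and the one-variable minimization — is routine.
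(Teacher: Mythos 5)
Your proof is correct, and it follows the paper's own strategy up through the key identity $\#\bigl(\bigcup_{i=1}^t[A_i,[n]]\bigr)=2^n\bigl(1-\prod_{i=1}^t(1-2^{-a_i})\bigr)$ and the resulting bound $\prod_{i=1}^t(1-2^{-a_i})\ge\tfrac12$. Where you diverge is in how you extract $t<Cn/\log_2 n$ from that inequality together with $\sum a_i\le n$. The paper replaces the $a_i$ by their average $n/t$ (a Jensen step, since $\log(1-2^{-x})$ is concave), massages the resulting one-variable inequality into $n>t\log_2(t/\ln 2)$, and then asserts that ``straightforward but lengthy calculations'' yield the stated bound. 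You instead pass through $\ln(1-x)<-x$ to obtain $\sum_{i=1}^t 2^{-a_i}<\ln 2$, and close with a single Lagrange-weight inequality $a+\lambda 2^{-a}\ge\log_2(\lambda\ln 2)+\log_2 e$, tuned at $\lambda=n/(e\ln 2)$ so that the left side collapses to $\log_2 n$ and the right side of the summed inequality becomes exactly $Cn$. This is a cleaner finish: it eliminates the unspecified ``lengthy calculations,'' makes the constant $C=1+e^{-1}$ transparent, and, because the one-variable minimization is over all of $\bbR$, works uniformly for every $n\ge 2$ without any asymptotic caveat. Your explicit observation that $\es\notin\cU$ (hence $a_i\ge 1$, so $2^{-a_i}\in(0,1)$ and the strict logarithm inequality applies) is a small but worthwhile bookkeeping point that the paper leaves implicit.
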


That is, for any sufficiently large collection of sets from a sufficiently small up-set there are two intersecting sets from that collection.

\begin{proof}
Assume $\cU \sbse \cP(n)$ is an up-set with $\#\cU \leq 2^{n-1}$ and $A_1, \dots, A_t \in \cU$ are pairwise disjoint. We aim to bound $t$. For all $i=1, \dots, t$ then $[A_i, [n]] \sbse \cU$, so
$$
\bigcup_{i=1}^t [A_i, [n]] \sbse \cU.
$$
It holds that $\bigcap_{i \in I} [A_i, [n]] = [\bigcup_{i \in I} A_i, [n]]$ for all nonempty set of indices $I \sbse [t]$. For $i \in [t]$ set $a_i \coloneqq \#A_i$. Since the $A_i$'s are pairwise disjoint it holds
$$
\#\bigcup_{i \in I} A_i = \sum_{i \in I} a_i
$$
for all nonempty $I \sbse [t]$. By inclusion-exclusion we get
\begin{align*}
\#\cU & \geq \# \bigcup_{i=1}^t [A_i, [n]] = \sum_{\es \neq I \sbse [t]} (-1)^{\#I - 1} \cdot \# \bigcap_{i \in I} [A_i, [n]] = \sum_{\es \neq I \sbse [t]} (-1)^{\#I - 1} \cdot 2^{n - \sum_{i \in I} a_i} \\
 & = 2^n \left( 1-\prod_{i=1}^t (1-2^{-a_i})\right).
\end{align*}
For variables $x_1, \dots, x_t \geq 0$ under the constraint $\sum_{i=1}^t x_i \leq n$, the expression
$$
1-\prod_{i=1}^t (1-2^{-x_i})
$$
is minimized for $x_1 = \dots = x_t = \frac{n}{t}$. This yields
$$
2^{n-1} \geq \#\cU \geq 2^n \left(1-\left(1-2^{-n/t}\right)^t\right) = 2^n - \left(2^{n/t}-1\right)^t,
$$
equivalently
$$
n \geq t \cdot \log_2 \left( \frac{1}{1-2^{-1/t}}\right).
$$
Using $e^{-x} > 1-x$ with $x = \frac{\ln 2}{t}$ one obtains
$$
n > t \cdot \log_2\left(\frac{t}{\ln 2}\right).
$$
Straightforward but lengthy calculations finally show
$$
t < \frac{Cn}{\log_2 n}.
$$
\end{proof}

\begin{rem}
In the above theorem, it seems to be possible to relax the condition
$$
t \geq \frac{C n}{\log_2 n}
$$
to
$$
t \geq \frac{(1+o(1))n}{\log_2 n},
$$
where $o(1)$ denotes a function varying in $n$ and tending to $0$ as $n$ tends to infinity. While this will not affect the considerations below very much, it might be of separate interest to investigate optimal conditions for Theorem \ref{thm:IntersectingInUpset} and related statements.
\end{rem}

To continue we repeat some notions from graph theory, see \cite{Die16} for a detailed introduction. For a set $X$ let ${X \choose 2}$ be the set of two element subsets of $X$. Let $G = (V, E)$ be a (simple) graph. A \emph{clique} in $G$ is a subset $X \sbse V$ with
$$
{X \choose 2} \sbse E.
$$
The \emph{clique number} $\omega(G)$ of $G$ is the size of a largest clique in $G$. A set $A \sbse V$ is an \emph{independent set} in $G$ if
$$
{A \choose 2} \cap E = \es.
$$
The \emph{independence number} $\alpha(G)$ of $G$ is the size of a largest independent set in $G$. The inequality $\alpha(G) < t$ is equivalent to the statement that any $t$ vertices contain at least one edge. Denoting by $\overline{G} \coloneqq (V, {V \choose 2} \sm E)$ the \emph{complement} of $G$, by definition it holds $\alpha(G) = \omega(\overline{G})$. \emph{Turán's theorem} \cite{Tur41} gives a bound on the number of edges in a graph with a given clique number.

\begin{thm} \label{thm:Turan}
Let $G = (V, E)$ be a graph on $\#V = n$ vertices and $\#E = m$ edges. Let $t \in \bbN$.
\begin{itemize}
\item [(i)] If $\omega(G) < t$ then $m \leq \left(1-\frac{1}{t-1}\right) \cdot \frac{n^2}{2}$.

\item [(ii)] If $\alpha(G) < t$ then $m \geq \frac{1}{t-1} \cdot \frac{n^2}{2} - \frac{n}{2}$.
\end{itemize}
\end{thm}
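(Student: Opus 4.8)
\noindent\emph{Proof plan.} I would prove part (ii) first and then obtain part (i) by passing to the complement graph. The engine for (ii) is the Caro--Wei bound $\alpha(G) \geq \sum_{v \in V} \frac{1}{d(v)+1}$, where $d(v)$ denotes the degree of $v$; together with Cauchy--Schwarz this produces a lower bound on $\alpha(G)$ depending only on $n$ and $m$, which rearranges into the asserted edge bound. We may assume $t \geq 2$, since for $n \geq 1$ both hypotheses are otherwise vacuous.

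For the Caro--Wei estimate I would argue probabilistically. Choose a uniformly random linear ordering of $V$ and let $I \sbse V$ be the set of vertices that precede all of their neighbours in that ordering. Then $I$ is independent, because two adjacent vertices cannot each precede the other, and $v \in I$ exactly when $v$ comes first among the $d(v)+1$ vertices of $\{v\} \cup N(v)$, an event of probability $\frac{1}{d(v)+1}$. Hence $\mathbb{E}\bigl[\#I\bigr] = \sum_{v \in V} \frac{1}{d(v)+1}$, so some ordering yields an independent set of at least this size, giving $\alpha(G) \geq \sum_{v \in V}\frac{1}{d(v)+1}$. Cauchy--Schwarz gives $\left(\sum_{v}\frac{1}{d(v)+1}\right)\left(\sum_{v}(d(v)+1)\right) \geq n^2$, and since $\sum_{v}(d(v)+1) = 2m + n$ we obtain $\alpha(G) \geq \frac{n^2}{2m+n}$. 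If $\alpha(G) < t$ then $\alpha(G) \leq t-1$, so $(t-1)(2m+n) \geq n^2$, which rearranges to $m \geq \frac{n^2}{2(t-1)} - \frac{n}{2}$; this is (ii).

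For part (i), recall $\omega(G) = \alpha(\overline{G})$, and $\overline{G}$ has $\binom{n}{2} - m$ edges. If $\omega(G) < t$ then $\alpha(\overline{G}) < t$, so part (ii) applied to $\overline{G}$ yields $\binom{n}{2} - m \geq \frac{n^2}{2(t-1)} - \frac{n}{2}$, hence $m \leq \binom{n}{2} + \frac{n}{2} - \frac{n^2}{2(t-1)} = \frac{n^2}{2} - \frac{n^2}{2(t-1)} = \left(1 - \frac{1}{t-1}\right)\frac{n^2}{2}$, which is (i). There is no real obstacle here: the single non-routine ingredient is the random-ordering proof of the Caro--Wei inequality, and everything downstream of it is elementary algebra. (An alternative route to (i) is Zykov symmetrisation, reducing an extremal $K_t$-free graph to a complete $(\leq t-1)$-partite graph and then applying Cauchy--Schwarz to the part sizes; that even yields the exact Tur\'an number, but the weaker bound stated here suffices for what follows.)
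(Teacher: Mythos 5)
Your proof is correct, but it reverses the paper's logical order and is more self-contained. The paper simply cites Diestel for part (i) and obtains (ii) by passing to the complement; you instead prove (ii) from first principles via the Caro--Wei bound $\alpha(G)\geq \sum_v \frac{1}{d(v)+1}$ (random ordering argument) combined with Cauchy--Schwarz to get $\alpha(G)\geq \frac{n^2}{2m+n}$, and then recover (i) by complementation. Both complementation directions work because $\alpha(G)=\omega(\overline{G})$ and the two inequalities are exact complements of each other under $m \leftrightarrow \binom{n}{2}-m$ (note $\binom{n}{2}+\frac{n}{2}=\frac{n^2}{2}$, which is why the $-\frac{n}{2}$ error term in (ii) disappears in (i)). What your route buys is a short, fully self-contained argument for the precise (slightly-weaker-than-Tur\'an) bounds actually stated, without invoking the full Tur\'an theorem on Tur\'an graphs; what it costs is nothing for the purposes of this paper, since only these weaker bounds are used downstream in Theorem~\ref{thm:FreqencyBound}. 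Your remark about $t\geq 2$ is appropriate: for $t=1$ the right-hand sides are undefined and the hypothesis $\omega(G)<1$ (resp.\ $\alpha(G)<1$) forces $n=0$, so nothing is lost.
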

\begin{proof}
For (i) see \cite{Die16}. For (ii) apply (i) to the complement $\overline{G}$.
\end{proof}

We will use Turán's theorem together with Theorem \ref{thm:IntersectingInUpset} to get a bound on the frequency of the most frequent element among a nontrivial, union closed family.

\begin{thm} \label{thm:FreqencyBound}
Let $n \in \bbN, n \geq 2$ and let $\cF \sbse \cP(n)$ be a nontrivial, union closed family. Set $\#\cF = m$ and assume
$$
m \geq \frac{7n}{\log_2 n}.
$$
Then there is an $x \in [n]$ with
$$
\#\{F \in \cF: x \in F\} \geq \frac{\sqrt{\log_2 n}}{3n} \cdot \#\cF.
$$
\end{thm}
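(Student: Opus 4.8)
The plan is to chain together three results from the excerpt---the up-set reduction of Theorem~\ref{thm:UpsetIntersections} (in the case $t=2$), the ``no large pairwise disjoint subfamily lives in a small up-set'' bound of Theorem~\ref{thm:IntersectingInUpset}, and Tur\'an's theorem (Theorem~\ref{thm:Turan}(ii))---and to conclude with a short double-counting argument. Write $m=\#\cF$ and $C=1+e^{-1}$.

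First I would apply Theorem~\ref{thm:UpsetIntersections} with $t=2$ to get an up-set $\cU\subseteq\cP(n)$ with $\#\cU\leq 2^{n-1}$ and $\#(\cF\cap\cU)\geq\tfrac12 m$. Put $\cG\coloneqq\cF\cap\cU$ and $N\coloneqq\#\cG\geq m/2$; note $\es\notin\cU$, since otherwise the up-set $\cU$ would contain $[\es,[n]]=\cP(n)$, contradicting $\#\cU\leq 2^{n-1}$, so every member of $\cG$ is nonempty. Next I would form the intersection graph $G=(\cG,E)$ with $\{F,F'\}\in E$ exactly when $F\cap F'\neq\es$. Because $\cG\subseteq\cU$ and $\#\cU\leq 2^{n-1}$, Theorem~\ref{thm:IntersectingInUpset} says $\cU$---hence $\cG$---contains no $t$ pairwise disjoint sets once $t\geq Cn/\log_2 n$; that is, the independence number satisfies $\alpha(G)<t_0$ with $t_0\coloneqq\lceil Cn/\log_2 n\rceil$. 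One checks $t_0\geq 3$ when $n\geq 2$, so Tur\'an's theorem applied to the $N$-vertex graph $G$ gives $\#E\geq\tfrac{1}{t_0-1}\cdot\tfrac{N^2}{2}-\tfrac N2$. Using $t_0-1<Cn/\log_2 n$ together with the hypothesis, which forces $N\geq\tfrac{7n}{2\log_2 n}$ and hence $\tfrac{Cn}{N\log_2 n}\leq\tfrac{2C}{7}$, this bound reduces to
\[
\#E\ \geq\ \bigl(1-\tfrac{2C}{7}\bigr)\cdot\frac{\log_2 n}{Cn}\cdot\frac{N^2}{2}\ =\ \frac{7-2C}{14C}\cdot\frac{N^2\log_2 n}{n}.
\]

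To extract the frequent element I would double count edges: each edge $\{F,F'\}$ is a pair of members of $\cG$ sharing some element $x\in[n]$, so with $d\coloneqq\max_{x\in[n]}\#\{F\in\cG:x\in F\}$ one has $\#E\leq\sum_{x\in[n]}\binom{\#\{F\in\cG:x\in F\}}{2}\leq n\cdot\tfrac{d^2}{2}$. Combining this with the lower bound on $\#E$ yields $d\geq\sqrt{2\#E/n}\geq\sqrt{\tfrac{7-2C}{7C}}\cdot\tfrac Nn\sqrt{\log_2 n}\geq\tfrac12\sqrt{\tfrac{7-2C}{7C}}\cdot\tfrac mn\sqrt{\log_2 n}$. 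A numerical check gives $\tfrac{7-2C}{7C}\geq\tfrac49$---this is exactly the inequality $C\leq\tfrac{63}{46}$, which holds for $C=1+e^{-1}$---so the leading factor is at least $\tfrac13$. Choosing $x$ to be a maximizer and using $\cG\subseteq\cF$ then gives $\#\{F\in\cF:x\in F\}\geq d\geq\tfrac{\sqrt{\log_2 n}}{3n}\cdot\#\cF$, which is the claim.

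The only genuine difficulty is the constant chase; there is no conceptual obstacle once Theorems~\ref{thm:UpsetIntersections} and~\ref{thm:IntersectingInUpset} and Tur\'an's theorem are in hand. What must be checked with care is that the accumulated slack---the subtracted $N/2$ in Tur\'an's bound, the ceiling in the definition of $t_0$, the crude estimate $\binom d2\leq d^2/2$, and the fact that one only controls $N\geq m/2$ rather than $N=m$---still leaves room for the constant $\tfrac13$. That is precisely why the hypothesis asks for $m\geq 7n/\log_2 n$ (the constant $7$ is what makes $\tfrac{7-2C}{7C}\geq\tfrac49$ hold); improving either constant would require the sharper version of Theorem~\ref{thm:IntersectingInUpset} alluded to in the remark following it.
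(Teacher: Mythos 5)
Your proposal is correct and follows essentially the same route as the paper: Theorem~\ref{thm:WeakUCS} (the $t=2$ case of Theorem~\ref{thm:UpsetIntersections}) to pass to a half-size subfamily in a small up-set, Theorem~\ref{thm:IntersectingInUpset} to bound its independence number in the intersection graph, Tur\'an's theorem to get many intersecting pairs, and a pigeonhole/double-count over $[n]$ to extract a frequent element, with the same constant bookkeeping tied to $C=1+e^{-1}$ and the hypothesis $m\geq 7n/\log_2 n$. The only cosmetic difference is that you double-count edges via $\#E\leq\sum_x\binom{d_x}{2}$, whereas the paper colors each edge by a chosen common element and applies pigeonhole to the colors; these are interchangeable formulations of the same final step.
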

\begin{proof}
By Theorems \ref{thm:WeakUCS} and \ref{thm:IntersectingInUpset} there is a subfamily $\cE \sbse \cF$ with $\mu := \#\cE \geq \frac{m}{2}$ and such that, setting $t := \left\lceil \frac{Cn}{\log_2 n}\right\rceil$ with $C = 1+e^{-1}$, for any $A_1, ..., A_t \in \cE$ at least two of these sets have a nonempty intersection. Consider the graph $G = (\cE, E(G))$ with
$$
E(G) = \left\{E_1E_2 \in {\cE \choose 2}: E_1 \cap E_2 \neq \es\right\}.
$$
By construction $\alpha(G) < t$, so that by Theorem \ref{thm:Turan} (ii) we have
$$
\#E(G) \geq \frac{1}{t-1} \cdot \frac{\mu^2}{2} - \frac{\mu}{2}.
$$
For every $E_1E_2 \in E(G)$ pick a $c(E_1E2) \in E_1 \cap E_2$. This defines an edge coloring $c: E(G) \ra [n]$. Consequently, there is an $x \in [n]$ that appears on at least
\begin{align} \label{ineq:Colors}
\frac{1}{n} \cdot \left( \frac{1}{t-1} \cdot \frac{\mu^2}{2} - \frac{\mu}{2} \right)
\end{align}
edges. Let $G'$ be the graph induced by the edges of color $x$ and let $\mu'$ be the number of vertices in $G'$, so that (using \eqref{ineq:Colors} and $t \leq \frac{Cn}{\log_2 n} + 1$)
$$
\frac{(\mu')^2}{2} \geq {\mu' \choose 2} \geq \frac{1}{n} \cdot \left( \frac{1}{t-1} \cdot \frac{\mu^2}{2} - \frac{\mu}{2} \right) \geq \frac{\mu^2}{2n \cdot \frac{Cn}{\log_2 n}} - \frac{\mu}{2n}, 
$$
so that
$$
\mu' \geq \sqrt{\frac{\mu^2 \log_2 n}{Cn^2} - \frac{\mu}{n}}
$$
The right hand side for $\mu \geq \frac{m}{2}$ is minimized at $\mu = \frac{m}{2}$ (using the assumptions from the statement), so that
$$
\mu' \geq \sqrt{\frac{m^2 \log_2 n}{4Cn^2} - \frac{m}{2n}} = \sqrt{\frac{1}{4C} - \frac{n}{2m \log_2 n}} \cdot \frac{\sqrt{\log_2 n}}{n} \cdot m.
$$
Using again the assumptions we finally obtain
$$
\#\{F \in \cF: x \in F\} \geq \mu' \geq \sqrt{\frac{1}{4C} - \frac{1}{14}} \cdot \frac{\sqrt{\log_2 n}}{n} \cdot m \geq \frac{\sqrt{\log_2 n}}{3n} \cdot m.
$$
\end{proof}

\begin{rem}
There are already known lower bounds on the frequency of a most frequent element in $\cF$, see \cite{BS15}. In particular, by \cite{Bal11, Rei03, Woj99} respectively (the third being an improvement by a constant of a bound in \cite{Kni94}), it is known that there is an element that is contained in an
$$
\Omega\left(\max\left\{\sqrt{\frac{\log_2 n}{n}}, \frac{\log_2 m}{n}, \frac{1}{\log_2 m}\right\}\right)
$$
-fraction of all sets from $\cF$ (for $n$ and $m$ sufficiently large). The first lower bound supersedes the bound from Theorem \ref{thm:FreqencyBound}. Notice however that the above proof does not use all the information known about $\cE$. In particular, one could assume $\cE$ to be union closed. Also, one might see that the above technique via Turán's theorem might not be optimal since the intersection $E_1 \cap E_2$ might be very large so that there are a lot of possibilities for $c(E_1E_2)$. There is some hope that more refined arguments also yield a better bound.
\end{rem}

\subsection{Intersecting Families}

The idea of the proof of Theorem \ref{thm:FreqencyBound} was that any union closed family $\cF$ contains a large subfamily $\cE$ so that any sufficiently large quantity of sets from $\cE$ must contain two intersecting sets. One way one could improve the bound from Theorem \ref{thm:FreqencyBound} is via the following open question. A family of sets $\cE$ is called \emph{intersecting} if for all $A, B \in \cE$ it holds $A \cap B \neq \es$.

\begin{q} \label{que:Intersecting}
For every nontrivial, union closed family $\cF \sbse \cP(n)$ does there exist an intersecting subfamily $\cE \sbse \cF$ with $\#\cE \geq \frac{1}{2} \cdot \#\cF$?
\end{q}

This is again a weaker version of the Union Closed Sets Conjecture \ref{conj:UCS}, but would strengthen Theorem \ref{thm:WeakUCS}. In this context, it is natural to ask about the frequency of a most frequent element in an intersecting family. Even though intersecting families are well studied objects in combinatorics (dating back to \cite{EKR61} for example), this aspect does not seem to have been investigated so far.

\begin{thm} \label{thm:FrequencyIntersecting}
Let $\cE \sbse \cP(n)$ be an intersecting family of size $\#\cE = m$. There is an $x \in [n]$ with
$$
\#\{E \in \cE: x \in F\} \geq \frac{1}{2} + \sqrt{\frac{1}{4} + \frac{m^2-m}{n}} \geq \sqrt{\frac{m-1}{mn}} \cdot \#\cE.
$$
\end{thm}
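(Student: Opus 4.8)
The plan is to adapt the edge-colouring argument from the proof of Theorem~\ref{thm:FreqencyBound}, exploiting that for an intersecting family the relevant graph is complete. We may assume $m \geq 1$ (for $m = 0$ there is nothing to prove); note that being intersecting forces every member of $\cE$ to be nonempty, since $A = A \cap A \neq \es$ for $A \in \cE$. First I would take the complete graph on vertex set $\cE$: every pair $\{A, B\} \in \binom{\cE}{2}$ has $A \cap B \neq \es$, so I may choose $c(\{A, B\}) \in A \cap B$, obtaining an edge-colouring $c \colon \binom{\cE}{2} \to [n]$. Since $\binom{\cE}{2}$ has $\binom{m}{2} = \frac{m^2 - m}{2}$ edges and at most $n$ colours occur, pigeonhole gives a colour $x \in [n]$ used on at least $\frac{1}{n} \cdot \frac{m^2 - m}{2}$ edges.

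Next I would let $\mu'$ be the number of members of $\cE$ incident to at least one edge of colour $x$. Each such member contains $x$, so $\#\{E \in \cE : x \in E\} \geq \mu'$. On the other hand, the edges of colour $x$ span a simple graph on these $\mu'$ vertices and hence number at most $\binom{\mu'}{2}$; combining with the previous step gives $\frac{\mu'(\mu' - 1)}{2} \geq \frac{m^2 - m}{2n}$, that is $(\mu')^2 - \mu' - \frac{m^2 - m}{n} \geq 0$. Since the smaller root of this quadratic is nonpositive while $\mu' \geq 1$ (when $m \geq 2$; the case $m = 1$ being immediate by picking any $x$ in any member of $\cE$), it follows that $\mu'$ is at least the larger root, namely $\frac{1}{2} + \sqrt{\frac{1}{4} + \frac{m^2 - m}{n}}$, yielding the first inequality.

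The second inequality is elementary: from $\sqrt{\tfrac14 + a} \geq \sqrt{a}$ for $a \geq 0$ one gets $\frac{1}{2} + \sqrt{\frac{1}{4} + \frac{m^2 - m}{n}} \geq \sqrt{\frac{m^2 - m}{n}} = m \sqrt{\frac{m - 1}{mn}} = \sqrt{\frac{m-1}{mn}} \cdot \#\cE$. I do not anticipate a real obstacle here; the only points deserving a moment's care are the degenerate small cases ($m \leq 1$) and the observation that $\mu'$ genuinely bounds the frequency of $x$ from below rather than merely counting monochromatic edges — everything else is pigeonhole together with solving one quadratic.
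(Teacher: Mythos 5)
Your proposal is correct and follows essentially the same argument as the paper: form the complete graph on $\cE$, colour each edge $\{A,B\}$ by an element of $A \cap B$, pigeonhole to find a colour $x$ on at least $\tfrac{1}{n}\binom{m}{2}$ edges, bound the number of vertices incident to these edges by solving $\binom{\mu'}{2} \geq \tfrac{1}{n}\binom{m}{2}$, and note that each such vertex is a set containing $x$. The only additions you make (handling $m \leq 1$ and observing $\mu' \geq 1$ to select the larger root) are minor clarifications of points the paper leaves implicit.
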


The proof is an adapted version of the proof of Theorem \ref{thm:FreqencyBound}.

\begin{proof}
Consider the graph $G = (\cE, {\cE \choose 2})$ and a coloring
$$
c: {\cE \choose 2} \ra [n], c(E_1E_2) \in E_1 \cap E_2.
$$
There is a color $x \in [n]$ that appears on at least
$$
\frac{1}{n} \cdot {m \choose 2}
$$
edges from $G$. Let $G'$ be the subgraph induced by the edges of color $x$ and let $m'$ be the number of vertices in $G'$. Thus
$$
{m' \choose 2} \geq \frac{1}{n} \cdot {m \choose 2},
$$
equivalently
$$
m' \geq \frac{1}{2} + \sqrt{\frac{1}{4} + \frac{m^2-m}{n}}.
$$
Since (by construction) every vertex from $G'$ is a set containing $x$, the claim follows.
\end{proof}

The bound from the above theorem can be sharp, for example for \emph{projective planes} (see \cite{BJL99} for details). For $\#\cE \geq 2$ the theorem gives an element contained in at least $(2n)^{-1/2} \cdot \#\cE$. Together with Question \ref{que:Intersecting} one would then get an element contained in at least an $\Omega(n^{-1/2})$-fraction of sets from a nontrivial, union closed family $\cF$. While this is again worse than the bound from \cite{Bal11}, one could again try to refine the argument from above. In particular, one can consider the following question.

\begin{q} \label{que:UCIntersecting}
What can be said about the frequency of the most frequent element in a nontrivial, union closed, intersecting family $\cF \sbse \cP(n)$?
\end{q}

We finish by stating that it can also be of interest to combine Question \ref{que:UCIntersecting} with Question \ref{que:UCSFrequencies}. We leave this for future research.

\section{Acknowledgement}

I would like to thank my mentors Christoph Helmberg, Martin Winter and Tino Ullrich of the TU Chemnitz and Tibor Szabó of the FU Berlin, as well as all my colleagues of the TU Chemnitz and FU Berlin for their continued support.

\end{document}